\def\date{15.2.2014} 
\def\@biblabel#1{[#1]}
\newcommand{\mlabel}{\label}
\renewcommand{\cV}{\mathcal{V}}
\newcommand{\cI}{\mathcal{I}}
\newcommand{\cL}{\mathcal{L}}
\newcommand{\cO}{\mathcal{O}}
\newcommand{\cH}{\mathcal{H}}
\newcommand{\Herm}{\mathop{{\rm Herm}}\nolimits}
\newcommand{\Pol}{\mathop{{\rm Pol}}\nolimits}
\renewcommand{\sp}{\mathop{{\rm sp}}\nolimits}
\renewcommand{\ham}{\mathop{{\rm ham}}\nolimits}
\newcommand{\indlim}{{\displaystyle \lim_{\longrightarrow}}\ }
\newcommand{\prolim}{{\displaystyle\lim_{\longleftarrow}}\ }
\newcommand{\matr}[1]{\begin{matrix}#1\end{matrix}}
\begin{document}

\title*{Weak Poisson structures on infinite dimensional manifolds and hamiltonian actions} 
\author{K.-H. Neeb, H. Sahlmann, T. Thiemann} 
\institute{K.-H. Neeb, 
Department of Mathematics, 
Friedrich--Alexander University Erlangen-Nuremberg, 
Cauerstrasse 11, 91058 Erlangen, Germany,  \\
Email: neeb@math.fau.de 
\and 
H. Sahlmann and T. Thiemann, 
Friedrich--Alexander University Erlangen-Nuremberg, 
Institute for Quantum Gravity, 
Theoretical Physics III, 
Staudtstr. 7, 91058 Erlangen, Germany,  
Email:thomas.thiemann@gravity.fau.de, hanno.sahlmann@gravity.fau.de} 
\maketitle

\centerline{\date}

\abstract{We introduce a notion of a weak Poisson structure on a 
manifold $M$ modeled on a locally convex space. This is done by specifying 
a Poisson bracket on a subalgebra $\cA \subeq C^\infty(M)$ which has to satisfy 
a non-degeneracy condition (the differentials of elements of $\cA$ separate 
tangent vectors) and we postulate the existence of smooth Hamiltonian vector 
fields. Motivated by applications to Hamiltonian actions, we focus 
on affine Poisson spaces which include in particular the linear and affine 
Poisson structures on duals of locally convex Lie algebras. 
As an interesting byproduct of our approach, we can associate to an invariant 
symmetric bilinear form $\kappa$ on a Lie algebra $\g$ and a $\kappa$-skew-symmetric 
derivation $D$ a weak 
affine Poisson structure on $\g$ itself. This leads naturally to a 
concept of a Hamiltonian $G$-action on a weak Poisson manifold 
with a $\g$-valued momentum map and hence to a generalization of 
quasi-hamiltonian group actions. }

\section{Introduction} \mlabel{sec:1}

In geometric mechanics symplectic and Poisson manifolds form the basic underlying 
geometric structures on manifolds. In the finite dimensional context, this provides 
a perfect setting to model systems whose states depend on finitely many parameters 
(\cite{MR99}). In the context of symplectic geometry, resp., 
Hamiltonian flows, Banach manifolds were introduced by Marsden (\cite{Mar67}), and 
Weinstein obtained a Darboux Theorem for strong symplectic Banach manifolds 
(\cite{Wei69}).\begin{footnote}{ 
A symplectic form $\omega$ on $M$ is called {\it strong} if, for every 
$p \in M$, every continuous linear functional on $T_p(M)$ is of the form 
$\omega_p(v,\cdot)$ for some $v \in T_p(M)$.}  
\end{footnote}
Schmid's monograph \cite{Sch87} provides a nice introduction to 
infinite dimensional Hamiltonian systems. 
For more recent results on Banach--Lie--Poisson spaces we refer to the recent work of 
Ratiu, Odzijewicz and Beltita (\cite{OdR03, OdR04}, \cite{BR05}, 
\cite{OdR08}, \cite{Ra11}) and in particular for \cite{Gl08} for certain classes of 
locally convex spaces. 

In the present note we describe a possible approach to Poisson structures on 
infinite dimensional manifolds that works naturally for smooth manifolds modeled 
on locally convex spaces, such as spaces of test functions, smooth sections 
of bundles and distributions (\cite{Ha82}, \cite{Ne06}). Our requirements are minimal 
in the sense that any other concept of an infinite dimensional 
Poisson manifold should at least satisfy our requirements. 

In the finite dimensional case, the main focus of the theory of Poisson manifolds 
lies on the Poisson tensor $\Lambda$ which is a section of the vector bundle 
$\Lambda^2(T(M))$ and defines a skew-symmetric form on each cotangent space 
$T^*_m(M)$. This does not generalize naturally to infinite dimensional manifolds 
because continuous bilinear maps may be of infinite rank. 
Our main point is to define a weak Poisson structure on a smooth manifold 
$M$ by a Poisson bracket $\{\cdot,\cdot\}$ 
on a unital subalgebra $\cA \subeq C^\infty(M)$ satisfying the Leibniz rule and the 
Jacobi identity. In addition to that, we require that $\cA$ is large in the sense 
that, for every $m \in M$, the differentials $\dd F(m)$, $F \in \cA$, 
separate the points in the tangent space $T_m(M)$. We also require for 
each $H \in \cA$ the existence of a smooth Hamiltonian 
vector field $X_H$ determined by 
$\{F,H\} = X_H F$ for every $F \in \cA$. 
The main difference to the traditional approaches is that we do not require 
the Poisson bracket to be defined on all smooth functions, instead we restrict
 the class of admissible differentials to define Poisson 
brackets. It turns out that this rather algebraic approach 
is strong enough to capture the main formal features 
of momentum maps and affine Poisson structures on locally 
convex space as well as their relations with Lie algebras and their duals. 
In the affine case $M = V$, the minimal choice of $\cA$ is the subalgebra generated by 
a point separating subspace $V_*$ of the topological dual space $V'$. In this context 
one can also enlarge the algebra $\cA$ by adding certain exponential functions and 
extend the Poisson bracket appropriately; see \cite{Wa13} for such constructions. 

Although our approach largely ignores geometric difficulties we hope that it provides 
a natural language for dealing with Poisson structures on rather general infinite dimensional 
manifolds and that this leads to precise specifications of the key difficulties arising for 
concrete examples. A discussion of similar structures is used in the 
context of hydrodynamics (\cite{Ko07}) and for free boundary problems 
(\cite{LMMT86}). 

One of our main objectives was to understand the nature of the 
affine Poisson structures arising implicitly on Lie algebras of smooth loops 
in the context of Hamiltonian actions 
of loop groups and quasihamiltonian actions \cite{AMM98} 
(Section~\ref{sec:6}).

Although the construction of the tangent bundle $T(M)$ of a locally convex manifold 
$M$ and the Lie algebra $\cV(M)$ of smooth vector fields on $M$ 
follows pretty much the constructions from finite dimensional geometry 
(cf.~\cite[Ch.~8]{HN11}), 
serious difficulties arise when one wants to put a smooth manifold 
structure on the cotangent bundle 
$T'(M) := \dot\cup_{p \in M} T_p(M)'$ whose elements are continuous linear functionals
on the tangent spaces $T_p(M)$ of $M$. This works well for Banach manifolds 
when the dual spaces carry the norm topology, but if $M$ is not modeled on a 
Banach space, there may not be any topology for which the natural chart changes 
for $T'(M)$ are smooth. Accordingly, cotangent bundles can be constructed 
naturally if $M$ is an open subset of a locally convex space 
or if the tangent bundle $T(M)$ is trivial, in which case  
$T(M) \cong M \times V$ leads to $T'(M) \cong M \times V'$, so that any locally 
convex topology on $V'$ leads to a manifold structure on $T'(M)$. This works 
in particular for Lie groups. 

Since our main concern is with the algebraic framework for Poisson structures, we 
do not go into analytical aspects of symplectic leaves which 
are already subtle for Poisson manifolds not modeled on Hilbert spaces 
(\cite{BR05, BRT07, Ra11}). 

The structure of this paper is as follows. In Section~\ref{sec:2} 
we introduce the notion of a weak Poisson manifold and discuss various 
types of examples, in particular affine ones and weak symplectic manifolds. 
We also take a brief look at Poisson maps arising from inclusions of 
submanifolds and from submersions. In Section~\ref{sec:3} we then 
turn to momentum maps, which we consider as Poisson morphisms into 
affine Poisson spaces which arise naturally as subspaces 
of the dual of a Lie algebra $\g$. If $\g$ is the Lie algebra of a 
Lie group, we also have a global structure coming from the corresponding 
coadjoint action, but unfortunately there need not be any locally convex 
topology on $\g'$ for which the coadjoint action is smooth. 

As an interesting byproduct of our approach, one can use an invariant 
symmetric bilinear form $\kappa$ and a $\kappa$-skew-symmetric 
derivation $D$ on a Lie algebra $\g$ to obtain a weak 
affine Poisson structure on $\g$ itself. This leads naturally to a 
concept of a Hamiltonian $G$-action on a weak Poisson manifold 
with a $\g$-valued momentum map. 
For the classical case where $G$ is the loop group $\cL(K) = C^\infty(\bS^1,K)$ 
of a compact Lie group and the derivation is given by the derivative, 
we thus obtain the affine action on $\g = \cL(\fk)$ which corresponds to the 
natural action of the gauge group
 $\cL(K)$ on gauge potentials on the trivial $K$-bundle over $\bS^1$. 
At this point we obtain a natural concept of a Hamiltonian $\cL(K)$-space 
generalizing the one used in the context of quasi-hamiltonian $K$-spaces, 
where it is only defined for weak symplectic manifolds (\cite{AMM98}, \cite{Me08}). 

\acknowledgement{We thank Helge Gl\"ockner, Stefand Waldmann and Anton Alekseev 
for discussions on the subject matter of this manuscript and for pointing out 
references.}

\section{Infinite dimensional Poisson manifolds} 
\mlabel{sec:2}

In this section we introduce the concept of a weak Poisson structure on a locally convex manifold. 
Our requirements are minimal in the sense that any other concept of an infinite dimensional 
Poisson manifold should at least satisfy our requirements. The concept 
discussed below is strong enough to capture the main algebraic features 
of momentum maps and the Poisson structure on the dual of a Lie algebra.

\subsection{Locally convex manifolds}

We first recall the basic concepts concerning 
infinite dimensional manifolds modeled on locally convex spaces. 
Throughout these notes all topological 
vector spaces are assumed to be Hausdorff.

Let $E$ and $F$ be locally convex spaces, $U
\subeq E$ open and $f \: U \to F$ a map. Then the {\it derivative
  of $f$ at $x$ in the direction $h$} is defined as 
$$ \dd f(x)(h) := (\partial_h f)(x) := \frac{d}{dt}\Big|_{t = 0} f(x + t h) 
= \lim_{t \to 0} \frac{1}{t}(f(x+th) -f(x)) $$
whenever it exists. The function $f$ is called {\it differentiable at
  $x$} if $\dd f(x)(h)$ exists for all $h \in E$. It is called {\it
  continuously differentiable}, if it is differentiable at all
points of $U$ and 
$$ \dd f \: U \times E \to F, \quad (x,h) \mapsto \dd f(x)(h) $$
is a continuous map. The map $f$ is called a {\it $C^k$-map}, $k \in \N \cup \{\infty\}$, 
if it is continuous, the iterated directional derivatives 
$$ \dd^{j}f(x)(h_1,\ldots, h_j)
:= (\partial_{h_j} \cdots \partial_{h_1}f)(x) $$
exist for all integers $1\leq j \leq k$, $x \in U$ and $h_1,\ldots, h_j \in E$, 
and all maps $\dd^j f \: U \times E^j \to F$ are continuous. 
As usual, $C^\infty$-maps are called {\it smooth}. 

Once the concept of a smooth function 
between open subsets of locally convex spaces is established, it is clear how to define 
a locally convex smooth manifold. The tangent bundle $T(M)$ and 
the Lie algebra $\cV(M)$ of smooth vector fields on $M$ 
are now defined as in the finite dimensional case (cf.~\cite[Ch.~8]{HN11}) 
and differential $p$-forms are defined as smooth functions 
on the $p$-fold Whitney sum $T(M)^{\oplus p}$. Although 
it is clear what the cotangent bundle is as a set, namely 
the disjoint union $T'(M) := \dot\cup_{p \in M} T_p(M)'$ of the topological 
dual spaces of the tangent spaces, in general it is not clear how to 
put a smooth manifold structure on $T'(M)$. This is due to the fact that 
the dual $V'$ of the model space $V$ need not carry a locally convex topology 
for which the chart changes for $T'(M)$ are smooth. 
For a Banach manifold this works with the natural Banach space structure 
on the dual, and it also works for manifolds with a single chart and 
the weak-$*$-topology on the dual, but for general locally convex manifolds $M$ 
there seems to be no natural smooth structure on $T'(M)$  
(see \cite{Ne06, Ha82} for more details). 

\subsection{Weak Poisson manifolds} 

\begin{definition} {\rm Let $M$ be a smooth manifold modeled on a locally convex space. 
A {\it weak Poisson structure} on $M$ is a  unital subalgebra 
$\cA \subeq C^\infty(M,\R)$, i.e., it contains the constant functions 
and is closed under pointwise multiplication, 
with the following properties: 
\begin{description}
\item[\rm(P1)] $\cA$ is endowed with a {\it Poisson bracket} $\{\cdot,\cdot\}$, 
this means that it is a Lie bracket, i.e., 
\begin{equation}
  \label{eq:jaco}
\{F,G\} = -\{G,F\}, \quad  
 \{F,\{G,H\}\} = \{ \{ F,G\},H\} + \{ G, \{ F, H\}\},
\tag{J} \end{equation}
and it satisfies the Leibniz rule 
\begin{equation}
  \label{eq:leibniz}
 \{F, G H\} = \{F, G\}H + G \{ F, H\}.\tag{L}
\end{equation}
\item[\rm(P2)] For every $m \in M$ and $v \in T_m(M)$ satisfying 
$\dd F(m)v = 0$ for every $F \in \cA$ we have $v = 0$. 
\item[\rm(P3)] For every $F \in \cA$, there exists a smooth vector field $X_H \in \cV(M)$ 
with $X_H F = \{F,H\}$ for $F,H \in \cA$. It is called the corresponding 
{\it Hamiltonian vector field}. 
\end{description}

If {\rm(P1-3)} are satisfied, then we 
call the triple $(M,\cA,\{\cdot,\cdot\})$ a {\it weak Poisson manifold}. 
}\end{definition}

\begin{remark} \mlabel{rem:3.2} 
(a) (P2) implies that the vector field $X_H$ in (P3) is uniquely determined 
by the relation $\{F,H\}(m) = (X_H F)(m) = \dd F(m) X_H(m)$ for every $F \in \cA$. 

(b) For $F,G,H \in \cA$, 
\[ [X_F, X_G] H = \{\{H,G\},F\} - \{\{H,F\}, G\} = \{H,\{G,F\}\} = X_{\{G,F\}}H,\] 
so that 
\begin{equation}
  \label{eq:brel1}
[X_F, X_G] = X_{\{G,F\}}\quad \mbox{ for } \quad F,G \in \cA.
\end{equation}
We also note that the Leibniz rule leads to 
\begin{equation}
  \label{eq:prodrel1}
X_{FG}  = F X_G + G X_F \quad \mbox{ for } \quad F,G \in \cA.
\end{equation}

(c) If $\{\cdot,\cdot\} \: \cA \times \cA \to \cA$ is a skew-symmetric bracket 
satisfying the Leibniz rule, then the Jacobiator 
\begin{align*}
J(F,G,H) 
&:= \{F,\{G,H\}\} + \{G,\{H,F\}\} + \{H,\{F,G\}\} \\
&= \{F,\{G,H\}\} - \{G,\{F,H\}\} - \{\{F,G\},H\} 
\end{align*}
defines an alternating map $\cA^3 \to \cA$ which satisfies the Leibniz rule 
in every argument. It vanishes if and only if $\{\cdot,\cdot\}$ is a Lie bracket, 
i.e., if (P1) is satisfied. 
For a subset  $\cS \subeq \cA$ generating $\cA$ as a unital algebra, 
this observation implies that $J$ vanishes if it vanishes for $F,G,H \in~\cS$. 

(d) If (P1) and (P2) are satisfied, then 
\eqref{eq:prodrel1} implies that the subspace of all elements 
$X \in \cA$ for which $X_H$ as in (P3) exists 
is a subalgebra with respect to the pointwise product. Therefore it suffices to 
verify (P3) for a generating subset $\cS \subeq \cA$. 
\end{remark}

\begin{remark} \mlabel{rem:2.3} 
From (P3) it follows that the value of the Poisson bracket 
\[ \{F,G\}(p) = \dd F(p) X_G(p) = - \dd G(p) X_F(p) \] 
in $p \in M$ only depends on $\dd F(p)$, resp., $\dd G(p)$. 
On the separating subspace 
\[ T_p(M)_* := \{ \dd F(p) \: F \in \cA \} \subeq T_p(M)' \] 
we thus obtain a well-defined skew-symmetric bilinear map 
\[ \Lambda_p \: T_p(M)_* \times T_p(M)_* \to \R, \quad 
\Lambda_p(\alpha,\beta) := \{ F,G\}(p) \quad \mbox{ for }\quad 
\alpha = \dd F(p), 
\beta= \dd G(p).\] 
This suggests an extension of the Poisson bracket 
to the subalgebra $\cB \subeq C^\infty(M)$ of those 
functions $F$, for which $\dd F(p) \in T_p(M)_*$ holds for every 
$p \in M$, by the formula 
\[ \{F,G\}(p) := \Lambda_p(\dd F(p), \dd G(p)).\] 
At this point it is not clear that this results in a smooth function 
$\{F,G\}$ nor that, for $G \in \cB$, there exists a smooth vector field 
$X_G$ on $M$ such that $\{F,G\} = X_G F$ holds for $F \in \cB$ 
(cf.\ Example~\ref{ex:2.12} below for criteria). 
If both these conditions are satisfied and, in addition, the Poisson bracket 
on $\cB$ satisfies the Jacobi identity, then we can also work with the 
larger algebra $\cB$ instead of $\cA$. 
\end{remark}

\begin{remark} Suppose that $M$ is a Banach manifold. 
The notion of a Banach--Poisson manifold used in 
\cite{OdR08, Ra11} differs from our concept of a weak Poisson structure 
on $M$ in the sense that it is required that $\cA = C^\infty(M)$ and that 
every continuous linear functional on the dual space $T_p(M)'$ of the form 
$\alpha^\sharp := \Lambda_p(\alpha, \cdot)\in T_p(M)''$ can be represented 
by an element of $T_p(M)$. 
\end{remark}

\begin{remark} \mlabel{rem:2.4} 
Let $(M,\cA,\{\cdot,\cdot\})$ be a weak Poisson manifold. For $p \in M$, 
we call 
\[ C_p(M) := \{ X_F(p) \: F \in \cA\} \subeq T_p(M) \] 
the {\it characteristic subspace in $p$}. 
Then 
\[ \omega_p \: C_p(M) \times C_p(M) \to \R, \quad 
\omega_p(X_F(p), X_G(p)) := \{F,G\}(p) 
= \dd F(p) X_G(p) = - \dd G(p) X_F(p) \] 
is a well-defined skew-symmetric form. On the Lie algebra 
\[ \ham(M,\cA) := \{ X_F \: F \in \cA\} \subeq \cV(M) \] 
of hamiltonian vector fields, every form $\omega_p$ defines a $2$-cocycle 
\[ \tilde\omega_p(X,Y) := \omega_p(X(p), Y(p)) \] 
because 
\[ \tilde\omega_p([X_F, X_G], X_H) =
\tilde\omega_p(X_{\{G,F\}}, X_H) = \{\{G,F\},H\}(p)\] 
and $\{\cdot,\cdot\}$ satisfies the Jacobi identity. 
\end{remark}

\subsection{Examples of weak Poisson manifolds} 

We now turn to natural examples of weak Poisson manifolds. 

\begin{example} (Finite dimensional Poisson manifolds) Every 
finite dimensional (paracompact) Poisson manifold $(M,\Lambda)$ carries a natural weak 
Poisson structure with $\cA := C^\infty(M)$ and 
$\{F,G\}(m) := \Lambda_m(\dd F(m), \dd G(m))$. Then 
$T_m(M)^* = \{ \dd F(m) \: F \in \cA\}$ implies (P2) and the existence of 
$X_H \in \cV(M)$ follows from the fact that every derivation of the algebra 
$C^\infty(M)$ is of the form $F \mapsto XF$ for some smooth vector field $X \in\cV(M)$ 
(\cite[Thm.~8.4.18]{HN11}). 
\end{example}

\begin{remark} Let $V$ be a real vector space. 
We call a linear subspace $V_* \subeq V^*$ {\it separating} if 
$\alpha(v) = 0$ for every $\alpha \in V_*$ implies $v = 0$. 
This implies that, for every finite dimensional subspace 
$F \subeq V$, the restriction map $V_* \to F^*$ is surjective, and this in 
turn implies that the natural map 
$S(V) \to \R^{V_*}$ of the symmetric algebra $S(V)$ over $V$ to the algebra of 
functions on $V_*$ is injective. 
\end{remark}

\begin{theorem} \mlabel{thm:3.4} {\rm(Affine Poisson structures)} 
Let $V$ be a locally convex space and 
$V_*\subeq V'$ be a separating subspace. Further, let 
\begin{itemize}
\item[\rm(a)] $\Lambda \: V_* \times V_* \to \R$ be a skew-symmetric 
bilinear map with the property that, for every $\alpha \in V_*$, there exists 
an element $\alpha^\sharp \in V$ with 
$\Lambda(\beta, \alpha)= \beta(\alpha^\sharp)$ for every $\beta \in V_*$, and 
\item[\rm(b)] let $[\cdot,\cdot]_0$ be a Lie bracket on $V_*$ for which 
  \begin{itemize}
  \item[\rm(i)] $\Lambda$ is a $2$-cocycle, i.e., 
$\Lambda([\alpha,\beta], \gamma) + \Lambda([\beta,\gamma], \alpha) 
+ \Lambda([\gamma,\alpha], \beta)=0$ for 
$\alpha,\beta, \gamma \in V_*$. 
  \item[\rm(ii)] The linear maps $\ad_0 \alpha \: V_* \to V_*, \beta \mapsto 
[\alpha,\beta]_0$ have continuous adjoint maps \break 
$\ad_0^*\alpha \: V \to V$ defined by 
$\beta (\ad_0^* \alpha v) = [\alpha,\beta]_0(v)$ for $\alpha, \beta \in V_*$ 
and $v \in V$. 
  \end{itemize}
This leads to a Lie algebra structure on 
the space $\hat V_* := \R 1 \oplus V_*$ of affine functions on $V$ by 
\[ [t+ \alpha, s + \beta] := \Lambda(\alpha,\beta) + [\alpha,\beta]_0
\quad \mbox{ for } \quad t,s \in \R, \alpha,\beta \in V_*.\] 
\end{itemize}
Let $\cA \cong S(V_*) \subeq C^\infty(V)$ denote the unital subalgebra 
generated by $V_*$. 
Then $\dd F(v) \in V_*$ for  $F \in \cA$ and $v \in V$, and 
\[ \{F, G\}(v) :=\la [\dd F(v), \dd G(v)], v \ra \quad \mbox{ for } \quad 
v \in V, F, G\in \cA\] 
defines a weak Poisson structure on $V$. 
\end{theorem}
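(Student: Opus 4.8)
The plan is to verify (P1), (P2), (P3) for the subalgebra $\cA \cong S(V_*)$ with the proposed bracket, exploiting Remark~\ref{rem:3.2}(c)–(d) to reduce everything to the generating set $\cS = V_*$. First I would record the elementary fact, already stated, that since $V_*$ is separating the natural map $S(V_*) \to \R^{V}$ is injective, so $\cA$ really is (isomorphic to) the symmetric algebra and its elements are honest polynomial functions on $V$; differentiating a polynomial built from linear functionals $\alpha_i \in V_*$ shows that $\dd F(v)$ is a finite linear combination of the $\alpha_i$, hence lies in $V_*$, so the formula $\{F,G\}(v) = \la [\dd F(v),\dd G(v)], v\ra$ makes sense and, by the Leibniz rule for $\dd$, defines a skew-symmetric biderivation $\cA \times \cA \to \R$. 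One must check it is $\cA$-valued: for $F,G\in\cA$ the map $v \mapsto \dd F(v)$ is itself polynomial (affine-linear in the quadratic case, etc.), $[\cdot,\cdot]$ and $\Lambda$ are bilinear, and pairing with $v$ adds one more degree, so $\{F,G\}$ is again polynomial, i.e.\ lies in $\cA$. (For $F,G \in V_*$ themselves one gets $\{F,G\}(v) = \Lambda(F,G) + [F,G]_0(v)$, an affine function, which is the key base case.)

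Next, (P1): skew-symmetry is immediate. For the Jacobi identity, by Remark~\ref{rem:3.2}(c) it suffices to check that the Jacobiator $J(F,G,H)$ vanishes for $F = \alpha$, $G = \beta$, $H = \gamma$ in $\cS = V_*$. Here $\dd\alpha(v) = \alpha$ is constant in $v$, so $\{\alpha,\beta\}(v) = \la[\alpha,\beta],v\ra$ where $[\cdot,\cdot]$ is the bracket on $\hat V_* = \R 1 \oplus V_*$; computing $\{\alpha,\{\beta,\gamma\}\}$ requires differentiating the affine function $v\mapsto \la[\beta,\gamma],v\ra$, whose differential is the constant $[\beta,\gamma]_0 \in V_*$, giving $\{\alpha,\{\beta,\gamma\}\}(v) = \la[\alpha,[\beta,\gamma]_0],v\ra$. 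Summing cyclically, the cyclic sum of $\Lambda([\alpha,\beta]_0,\gamma)$ vanishes by hypothesis (b)(i), and the cyclic sum of $[\alpha,[\beta,\gamma]_0]_0$ vanishes by the Jacobi identity for $[\cdot,\cdot]_0$; so $J(\alpha,\beta,\gamma) = 0$. This is exactly the statement that $[\cdot,\cdot]$ on $\hat V_*$ is a Lie bracket, as asserted in (b).

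Then (P3): by Remark~\ref{rem:3.2}(d) it is enough to produce $X_H$ for $H = \alpha \in V_*$. I claim $X_\alpha$ is the smooth (affine) vector field $X_\alpha(v) := \alpha^\sharp + \ad_0^*\alpha\, v$, where $\alpha^\sharp$ comes from (a) and $\ad_0^*\alpha$ from (b)(ii); smoothness is clear since $v\mapsto \ad_0^*\alpha\,v$ is continuous linear. To verify $X_\alpha F = \{F,\alpha\}$ for all $F\in\cA$, since both sides are derivations in $F$ it suffices to check $F = \beta\in V_*$: $(X_\alpha\beta)(v) = \dd\beta(v)X_\alpha(v) = \beta(\alpha^\sharp) + \beta(\ad_0^*\alpha\,v) = \Lambda(\beta,\alpha) + [\alpha,\beta]_0(v)$, and on the other hand $\{\beta,\alpha\}(v) = \la[\dd\beta(v),\alpha],v\ra = \la[\beta,\alpha],v\ra = -\Lambda(\alpha,\beta) + [\beta,\alpha]_0(v)$; these agree by skew-symmetry of $\Lambda$ and of $[\cdot,\cdot]_0$. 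Finally (P2) is automatic: if $v\in T_v(V) \cong V$ satisfies $\dd F(v)v = 0$ for all $F\in\cA$, then in particular $\alpha(v) = \dd\alpha(v)v = 0$ for all $\alpha\in V_*$, so $v = 0$ because $V_*$ is separating.

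The routine part is the bookkeeping that $\{F,G\}\in\cA$ and that the derivation-extension arguments are legitimate; the only genuinely substantive point — and the step I would be most careful about — is the Jacobi computation, i.e.\ checking that the cyclic sum splits cleanly into the $\Lambda$-cocycle term and the $[\cdot,\cdot]_0$-Jacobi term with no cross terms. This works precisely because $\dd\alpha$ is constant for linear $\alpha$, so no second derivatives of $\Lambda$ or of $[\cdot,\cdot]_0$ enter; had we needed to test on quadratic elements the computation would be messier, which is exactly why invoking Remark~\ref{rem:3.2}(c) to restrict to $\cS = V_*$ is the crucial simplification.
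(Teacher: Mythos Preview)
Your approach is exactly the paper's: reduce (P1) to generators via Remark~\ref{rem:3.2}(c), reduce (P3) to generators via Remark~\ref{rem:3.2}(d), and get (P2) from $V_*$ separating. The structure and all the ideas are correct.

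There is, however, a sign slip in (P3). With your $X_\alpha(v) = \alpha^\sharp + \ad_0^*\alpha\, v$ you correctly compute
\[
(X_\alpha\beta)(v) = \Lambda(\beta,\alpha) + [\alpha,\beta]_0(v),
\]
but then
\[
\{\beta,\alpha\}(v) = \la[\beta,\alpha],v\ra = \Lambda(\beta,\alpha) + [\beta,\alpha]_0(v),
\]
and these differ by the sign of the $[\cdot,\cdot]_0$-term; skew-symmetry does \emph{not} make them agree. The correct Hamiltonian vector field is
\[
X_\alpha(v) = \alpha^\sharp - (\ad_0^*\alpha)\, v,
\]
as the paper records in \eqref{eq:hamvec2}; with this sign the verification goes through. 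Everything else is fine.
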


This weak Poisson structure is {\it affine} in the sense that, for 
$\alpha,\beta \in V_*$, the function $\{\alpha,\beta\}$ on $V$ is affine.

\begin{proof} First we observe that, for every $F \in \cA$ and $v \in V$, 
the Leibniz rule implies that the differential $\dd F(v)$ is contained in $V_*$. 
Therefore $\{\cdot,\cdot\}$ defines a skew-symmetric bracket \break 
$\cA \times \cA \to \R^V$ satisfying the 
Leibniz rule. For $\alpha, \beta \in V_*$, the function $\{\alpha,\beta\}$ 
is contained in $\hat V_* \subeq \cA$, and this implies 
that $\{\cA,\cA\} \subeq \cA$. To verify  the Jacobi identity, it suffices to do 
this on the generating subspace $V_* \subeq \cA$ 
(Remark~\ref{rem:3.2}(c)). For $\alpha, \beta, \gamma \in V_*$ we 
have $\{\alpha,\{\beta,\gamma\}\} = [\alpha,[\beta,\gamma]]$, 
so that (P1) follows from the Jacobi identity in the Lie algebra $\hat V_*$. 
Condition (P2) follows from the fact that 
$V_*\subeq \cA$ separates the points of $V$. 
To verify (P3), we first observe that, for $\alpha \in V_*$ and $F \in \cA$, we have 
\[ \{F,\alpha\}(v) = \la [\dd F(v), \alpha], v \ra 
= \Lambda(\dd F(v), \alpha) + [\dd F(v), \alpha]_0(v) 
= \dd F(v)(\alpha^\sharp) -\dd F(v) (\ad_0 \alpha)^* v.\] 
Therefore the affine vector field 
\begin{equation}
  \label{eq:hamvec2}
X_\alpha(v) := \alpha^\sharp - (\ad_0 \alpha)^* v 
\end{equation}
is a smooth vector field satisfying (P3). Now 
(P3) follows from an easy induction and \eqref{eq:prodrel1} 
(cf.\ Remark~\ref{rem:3.2}(d)). 
This completes the proof. 
\smartqed\qed\end{proof}

Specializing to the two particular cases $[\cdot,\cdot]_0 = 0$ 
and $\Lambda = 0$, we obtain constant, resp., linear Poisson structures 
as special cases. 

\begin{corollary} \mlabel{cor:3.5} {\rm(Constant Poisson structures)} 
Let $V$ be a locally convex space, 
$V_*\subeq V'$ be a separating subspace and 
$\Lambda \: V_* \times V_* \to \R$ be a skew-symmetric 
bilinear map with the property that, for every $\alpha \in V_*$, there exists 
an element $\alpha^\sharp \in V$ with 
$\Lambda(\beta, \alpha)= \beta(\alpha^\sharp)$ for every $\beta \in V_*$. 
Let $\cA \subeq C^\infty(V)$ denote the unital subalgebra 
generated by the linear functions in $V_*$. 
Then 
\[ \{F, G\}(v) := \Lambda(\dd F(v), \dd G(v)) \quad \mbox{ for } \quad 
v \in V, F, G\in \cA\] 
defines a weak Poisson structure on $V$. 
\end{corollary}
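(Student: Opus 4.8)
The plan is to derive the Corollary as the special case $[\cdot,\cdot]_0 = 0$ of Theorem~\ref{thm:3.4}. First I would note that with the trivial Lie bracket on $V_*$ all of hypothesis (b) of that theorem becomes automatic: the cocycle identity in (b)(i) reads $\Lambda([\alpha,\beta]_0,\gamma) + \Lambda([\beta,\gamma]_0,\alpha) + \Lambda([\gamma,\alpha]_0,\beta) = 0$, which is the trivial statement $0 = 0$; and in (b)(ii) the map $\ad_0\alpha = 0$ has the (continuous) adjoint $\ad_0^*\alpha = 0$. Hypothesis (a) is precisely what is assumed in the Corollary, and the subalgebra $\cA \cong S(V_*)$ generated by $V_*$ is the one named here. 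So Theorem~\ref{thm:3.4} applies verbatim.

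It then remains to check that the Poisson bracket produced by Theorem~\ref{thm:3.4} coincides with the one written in the Corollary. With $[\cdot,\cdot]_0 = 0$ the Lie bracket on $\hat V_* = \R 1 \oplus V_*$ is $[t + \alpha, s + \beta] = \Lambda(\alpha,\beta)\cdot 1$, i.e.\ it takes values in the central line $\R 1$. Hence, for $F, G \in \cA$ and $v \in V$,
\[ \{F,G\}(v) = \la [\dd F(v), \dd G(v)], v\ra = \la \Lambda(\dd F(v),\dd G(v))\cdot 1, v\ra = \Lambda(\dd F(v),\dd G(v)), \]
because the constant function $1$ has the value $1$ at every $v \in V$. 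This is the claimed formula, completing the reduction.

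Although the appeal to Theorem~\ref{thm:3.4} already finishes the proof, it is worth recording what the structure looks like in this case, since it is the cleanest: the Hamiltonian vector field of $\alpha \in V_*$ from \eqref{eq:hamvec2} specializes to the \emph{constant} vector field $X_\alpha(v) = \alpha^\sharp$, and those of general $F \in \cA$ are then recovered from \eqref{eq:prodrel1}; (P2) is immediate since $V_* \subeq \cA$ separates the points of $V$; and the Jacobi identity holds because, by Remark~\ref{rem:3.2}(c), the Jacobiator is determined by its values on the generators $\alpha \in V_*$, where $\dd\alpha(v) = \alpha$ is constant and the bracket is just the constant skew form $\Lambda$, whose Jacobiator vanishes trivially. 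I do not expect any genuine obstacle here; the only point needing a second's care is the identification $\la[\dd F(v),\dd G(v)],v\ra = \Lambda(\dd F(v),\dd G(v))$, which rests on the $\hat V_*$-bracket landing in $\R 1$ rather than in $V_*$, so that evaluation of the resulting affine function at $v$ simply returns that scalar.
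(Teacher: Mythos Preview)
Your proposal is correct and follows exactly the approach indicated in the paper: the corollary is obtained by specializing Theorem~\ref{thm:3.4} to the case $[\cdot,\cdot]_0 = 0$, which is precisely what you do. Your verification that the hypotheses trivialize and that the bracket formula reduces correctly is accurate and more detailed than the paper itself, which simply states the specialization without further comment.
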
 

\begin{example} (Canonical Poisson structures) Let $V$ be a locally convex space and 
$V_* \subeq V'$ be a separating subspace, endowed with a locally convex 
topology for which the pairing $V_* \times V \to \R$ is separately continuous. 
We consider the product space 
$W := V \times V_*$. Then $W_* := V_* \times V$ is a separating subspace of 
$W' \cong V' \times (V_*)'$, 
\[ \Lambda((\alpha,v), (\alpha', v')) := 
\alpha(v') - \alpha'(v) \] 
is a skew-symmetric bilinear form on $W_*$, and 
for $(\alpha,v)^\sharp := (v,-\alpha) \in W$, we have
\[ \Lambda((\alpha,v), (\alpha', v')) 
= \la (\alpha,v), (v', -\alpha') \ra 
= \la (\alpha,v), (\alpha',v')^\sharp \ra.\] 
Therefore we obtain with Corollary~\ref{cor:3.5} on $W$ a constant 
weak Poisson structure with $\cA \cong S(W_*)$ which is given on $W_* \times W_*$ by 
$\Lambda$. 
\end{example}

\begin{corollary}  \mlabel{cor:3.6} {\rm(Linear Poisson structures)} 
Let $V$ be a locally convex space,
$V_*\subeq V'$ be a separating subspace and 
$[\cdot,\cdot]$ be a Lie bracket on $V_*$ for which 
the linear maps $\ad \alpha \: V_* \to V_*$ 
have continuous adjoint maps $\ad^*\alpha \: V \to V$. 
Let $\cA \subeq C^\infty(V)$ denote the unital subalgebra 
generated by $V_*$. 
Then 
\[ \{F, G\}(v) :=\la [\dd F(v), \dd G(v)], v \ra \quad \mbox{ for } \quad 
v \in V, F, G\in \cA\] 
defines a weak Poisson structure on $V$. 
\end{corollary}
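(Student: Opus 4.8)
The plan is to obtain Corollary~\ref{cor:3.6} as the special case of Theorem~\ref{thm:3.4} in which the cocycle $\Lambda$ is taken to be identically zero. First I would set $\Lambda := 0$ in the hypotheses of Theorem~\ref{thm:3.4} and check that the three conditions (a), (b)(i), (b)(ii) all hold: condition (a) is trivially satisfied with $\alpha^\sharp := 0$ for every $\alpha \in V_*$, since then $\Lambda(\beta,\alpha) = 0 = \beta(0)$; condition (b)(i) is the statement that the zero form is a $2$-cocycle for the bracket $[\cdot,\cdot]$, which is obvious; and condition (b)(ii) is precisely the hypothesis of Corollary~\ref{cor:3.6}, namely that each $\ad\alpha$ has a continuous adjoint $\ad^*\alpha$ on $V$. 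Thus with $[\cdot,\cdot]_0 := [\cdot,\cdot]$ all hypotheses of Theorem~\ref{thm:3.4} are met.

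Next I would observe that, under these choices, the Lie bracket on $\hat V_* = \R 1 \oplus V_*$ degenerates to $[t + \alpha, s + \beta] = [\alpha,\beta]$, so that $\hat V_*$ is just the trivial one-dimensional central extension of $(V_*, [\cdot,\cdot])$. Consequently the bracket formula produced by Theorem~\ref{thm:3.4},
\[
\{F,G\}(v) = \la [\dd F(v), \dd G(v)], v \ra,
\]
coincides verbatim with the formula in the statement of Corollary~\ref{cor:3.6}, since the $\Lambda$-part of the bracket vanishes. Hence the weak Poisson structure on $V$ furnished by Theorem~\ref{thm:3.4} is exactly the one asserted in the corollary, and no further verification of (P1)--(P3) is needed --- they are inherited directly.

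I expect there to be essentially no obstacle here: the proof is a one-line specialization, and the only thing worth remarking is why the choice $\Lambda = 0$ (hence $\alpha^\sharp = 0$) is legitimate, which is immediate. If one wished to be fully self-contained one could instead just quote the relevant lines of the proof of Theorem~\ref{thm:3.4}: with $\Lambda = 0$ the Hamiltonian vector field \eqref{eq:hamvec2} becomes the linear vector field $X_\alpha(v) = -(\ad\alpha)^* v$, (P1) follows from the Jacobi identity in $V_*$, and (P2) from the fact that $V_* \subeq \cA$ separates points of $V$. The only point to double-check is the trivial one that $\{\alpha,\beta\}(v) = \la[\alpha,\beta],v\ra$ is again a linear (in particular $\cA$-valued) function, so that $\{\cA,\cA\} \subeq \cA$; this is why the structure is called \emph{linear}.

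\begin{proof}
This is the special case of Theorem~\ref{thm:3.4} obtained by taking $\Lambda := 0$.
Condition~(a) then holds with $\alpha^\sharp := 0$ for every $\alpha \in V_*$,
since $\Lambda(\beta,\alpha) = 0 = \beta(0)$; condition~(b)(i) holds because the
zero form is trivially a $2$-cocycle; and with $[\cdot,\cdot]_0 := [\cdot,\cdot]$
condition~(b)(ii) is precisely the hypothesis that each $\ad\alpha$ has a continuous
adjoint $\ad^*\alpha \: V \to V$. Since $\Lambda = 0$, the Lie bracket on
$\hat V_* = \R 1 \oplus V_*$ reduces to $[t+\alpha, s+\beta] = [\alpha,\beta]$, and
the bracket produced by Theorem~\ref{thm:3.4},
\[ \{F,G\}(v) = \la [\dd F(v), \dd G(v)], v \ra, \]
is exactly the one in the statement. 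Theorem~\ref{thm:3.4} thus yields a weak
Poisson structure on $V$; in this case the Hamiltonian vector field
\eqref{eq:hamvec2} is the linear vector field $X_\alpha(v) = -(\ad\alpha)^* v$.
\smartqed\qed\end{proof}
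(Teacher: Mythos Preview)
Your proposal is correct and follows exactly the approach indicated in the paper, which simply remarks that Corollary~\ref{cor:3.6} is the special case $\Lambda = 0$ of Theorem~\ref{thm:3.4} (and gives no separate proof). Your additional verification that conditions (a), (b)(i), (b)(ii) are trivially satisfied and that the Hamiltonian vector field reduces to $X_\alpha(v) = -(\ad\alpha)^* v$ is accurate and more detailed than what the paper provides.
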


For a version of the preceding corollary for Banach spaces, we refer to 
\cite[Thm.~3.2]{Ra11} and \cite{OdR03}. In this context $V$ is a Banach space 
and $V_* := V'$ is the dual Banach space. Typical examples of 
Banach--Lie--Poisson space are the duals of $C^*$-algebras and 
preduals of $W^*$-algebras. Here the example of the space 
$V = \Herm_1(\cH)$ of hermitian trace class operators on a Hilbert space $\cH$ 
is of particular importance in Quantum Mechanics. By the trace pairing, 
its dual can be identified with the Lie algebra of skew-hermitian compact operators. 

\begin{remark} In the context of Theorem~\ref{thm:3.4} one can enlarge the 
algebra $\cA \subeq C^\infty(V)$ under the following topological assumptions. 
We assume that $V_*$ carries a locally convex topology for which 
\begin{description}
\item[\rm(A1)] the pairing $\la \cdot, \cdot \ra \: V_* \times V \to \R$ is continuous, 
\item[\rm(A2)] the Lie bracket $[\cdot,\cdot] \:V_* \times V_* \to \hat V_*$ is continuous, 
\item[\rm(A3)] the map $V_* \times V \to V, (\alpha, v) \mapsto 
(\ad_0 \alpha)^* v$ is continuous, and 
\item[\rm(A4)] the map $\sharp \: V_* \to V$ is  continuous. 
\end{description}
Then 
\[ \cB := \{F  \in C^\infty(V) \: \dd F \in C^\infty(V,V_*)\} \] 
is a subalgebra of $C^\infty(V)$ with respect to the pointwise 
multiplication. For $F,G \in \cB$, the function 
\[ \{F,G\}(v) := 
[\dd F(v), \dd G(v)](v) 
= \la [\dd F(v), \dd G(v)]_0, v \ra 
+ \Lambda(\dd F(v), \dd G(v)) \] 
is smooth and so is the vector field 
\[ X_G(v) = -(\ad_0 \dd G(v))^* v + \dd G(v)^\sharp  \]  
on $V$ (cf.\ \eqref{eq:hamvec2}) which satisfies 
\[ \{F,G\} = X_G F= \la \dd F, X_G \ra 
\quad \mbox{ and } \quad \la \alpha, X_G(v) \ra = \la [\alpha, \dd G(v)], v \ra 
\quad \mbox{ for } \quad \alpha \in V_*, v\in V.\] 

For every $F \in \cB$, we now identify $\dd^2 F$ with a smooth 
function $\tilde\dd^2 F \: V \times V \to V_*$ which is linear in the second argument. 
The symmetry of the second derivative then leads to the relation 
\[ \dd^2 F_v(w,u) = \la \tilde\dd^2 F_v(w),u\ra = \la \tilde\dd^2 F_v(u),w\ra.\] 
We now show that $\{F,G\} \in \cB$. The calculation 
\begin{align*}
\dd\{F,G\}(v)(h)
&= [\dd^2 F_v(h), \dd G(v)](v) 
+ [\dd F(v), \dd^2 G_v(h)](v) 
+ \la [\dd F(v), \dd G(v)]_0, h \ra \\ 
&= \dd^2 F_v(h, X_G(v)) - \dd^2 G_v(h, X_F(v)) 
+ \la [\dd F(v), \dd G(v)]_0, h \ra \\ 
&= \la \tilde\dd^2 F_v(X_G(v)), h \ra 
- \la \tilde\dd^2 G_v(X_F(v)), h \ra 
+ \la [\dd F(v), \dd G(v)]_0, h \ra  
\end{align*}
shows that 
\[ \dd \{F,G\}(v) 
=  \tilde\dd^2 F_v(X_G(v)) 
- \tilde\dd^2 G_v(X_G(v)) 
+ [\dd F(v), \dd G(v)]_0 \] 
is a smooth $V_*$-valued function. Therefore the Poisson bracket 
extends to $\cB$. From 
\begin{align*}
\la [\dd F(v), \dd G(v)]_0, X_H(v)\ra 
&= \la [\dd F(v), \dd G(v)]_0, -(\ad_0 \dd H(v))^* v + \dd H(v)^\sharp\ra\\ 
&= \la \big[[\dd F(v), \dd G(v)]_0, \dd H(v)\big]_0, v \ra 
+ \Lambda([\dd F(v), \dd G(v)]_0, \dd H(v)) \\
&= \big[[\dd F(v), \dd G(v)]_0, \dd H(v)\big](v) 
= \big[[\dd F(v), \dd G(v)], \dd H(v)\big](v) 
\end{align*}
we now derive 
\begin{align*}
\{\{F,G\},H\}(v)
&= \dd^2 F_v(X_H(v), X_G(v)) -\dd^2 G_v(X_H(v), X_F(v)) 
+ \big[[\dd F(v), \dd G(v)], \dd H(v)\big](v).
\end{align*}
Now the symmetry of the second derivative implies that the Poisson 
bracket on $\cB$ satisfies the Jacobi identity, so that 
$(V,\cB,\{\cdot,\cdot\})$ also is a weak Poisson structure on $V$. 
\end{remark}

If $V$ is a Banach space with $V_* = V'$ (in particular if $\dim V < \infty$), 
then the preceding construction actually leads to all smooth functions 
$\cB = C^\infty(V)$, so that 
we are in the context of Banach--Lie--Poisson spaces. However, one can do better: 

\begin{remark} \mlabel{ex:2.12} (Gl\"ockner's locally convex Poisson vector spaces) 
To obtain Poisson structures on $V$ for the algebra 
$\cA = C^\infty(V)$ of all smooth functions, one has to impose stronger 
assumptions on topologies on $V$ and $V_*$. In \cite[Def.~16.35]{Gl08} these are 
encoded in the concept of a {\it locally convex Poisson vector space}, 
which requires that the locally convex space $V$ has the following properties: 
\begin{itemize}
\item[\rm(a)] For the topology of uniform convergence on compact ($S = c$), resp., bounded ($S= b$) 
subsets of $V$ (or even more general classes $S$ of subsets) the linear injection \break 
$\eta_V \: V \to (V'_S)'_S, \eta_V(v)(\alpha) = \alpha(v)$ is a topological embedding.  
\item[\rm(b)] The topology on every product space $V^n$ is determined by its restriction 
to compact subsets ($V$ is a $k^\infty$ space). 
\item[\rm(c)] The dual space $V_S'$ carries an $S$-hypocontinuous Lie bracket $[\cdot,\cdot]$, 
i.e., it is separately continuous and continuous on all subsets of the form 
$V_S' \times B$, $B \in S$.
\item[\rm(d)] The Lie bracket on $V_S'$ satisfies 
$\eta_V(v) \circ \ad_\alpha\in \eta_V(V)$ for $v \in V$ and $\alpha \in V_S'$. 
\end{itemize}
If these conditions are satisfied, then \cite[Thm.~16.40]{Gl08} asserts that, 
for two smooth functions $F,G \in C^\infty(V)$, their Poisson bracket 
\[ \{F,G\}(v) := \la [\dd F(v), \dd G(v)], v\ra \] 
is smooth and that 
\[ X_F(v) := -\eta_V^{-1}(\eta_V(v) \circ \ad(\dd F(v)))\] 
is a smooth vector field satisfying $\{G,F\} = X_F G$. 
As in the preceding remark it now follows that 
$(V,C^\infty(V),\{\cdot,\cdot\})$ is a weak Poisson manifold. 
This is the special case of Corollary~\ref{cor:3.6}, where $V_* = V_S'$. 
\end{remark}

\begin{example} \mlabel{ex:2.8} 
(a) Let $\g$ be a locally convex Lie algebra, i.e., a 
locally convex space with a continuous Lie bracket. 
We write $\g'$ for  its topological dual space, endowed with the 
weak-$*$-topology. Then Corollary~\ref{cor:3.6} applies to 
$V := \g'$ and  $V_* := \g$ because, for each $X \in \g$, 
the bracket map $\ad X \: \g \to \g$ has a continuous adjoint 
$\ad^* X \: \g' \to \g'$. 
If $\g$ is  finite dimensional, we thus obtain the 
KKS (Kirillov--Kostant--Souriau) Poisson structure on $\g^* = \g'$. 

(b) The preceding construction can be varied by changing 
the topology on $\g'$ and by passing to a smaller subspace. 
Let $\g_* \subeq \g'$ be a separating subspace 
on which the adjoint maps $\ad^*X\alpha := \alpha \circ \ad X$ induce for each 
$X \in \g$ a continuous linear map. Then Corollary~\ref{cor:3.6} applies with  
$V := \g_*$ and $V_* := \g$, and we thus obtain a weak Poisson structure on 
$\g_*$ for which the Hamiltonian functions $H_X(\alpha) = \alpha(X)$ satisfy 
\[ \{H_X, H_Y \} = H_{[X,Y]} \quad \mbox{ for } \quad X, Y \in g.\] 

(c) Suppose that $\g$ is a locally convex Lie algebra and 
$\kappa \: \g \times \g \to \R$ is a continuous non-degenerate 
symmetric bilinear form which is invariant under the adjoint representation, i.e., 
\[ \kappa([x,y], z) + \kappa(y,[x,z]) = 0 \quad \mbox{ for } \quad 
x,y,z \in \g.\] 
Then the natural map 
\[ \flat \: \g \to \g', \quad X^\flat(Y) := \kappa(X,Y) \] 
is injective and $\g$-equivariant with respect to the adjoint and coadjoint 
representation, respectively. We may thus apply (b) with 
$\g_* = \g^\flat= \{ X^\flat \: X \in \g\} \cong \g$ to obtain 
a linear weak Poisson structure 
on $\g$ with $\cA \cong S(\g)$. The 
Hamiltonian functions $X^\flat(Y) = \kappa(X,Y)$ satisfy 
\[ \{X^\flat, Y^\flat\} =[X,Y]^\flat \quad \mbox{ for } \quad X, Y \in \g.\] 

(d) Let $\g$ be a locally convex Lie algebra and 
$\omega \: \g \times \g \to \R$ be a continuous $2$-cocycle, i.e., 
\[ \omega([X,Y], Z) + \omega([Y,Z], X) + \omega([Z,X], Y) =0, \] 
so that $\hat\g = \R \oplus \g$ is a locally convex Lie algebra with 
respect to the Lie bracket 
\[ [(t,X), (s,Y)] := (\omega(X,Y), [X,Y]).\] 
We call it the {\it central extension defined by $\omega$}. 
Identifying the element $(t,X) \in \hat\g$ with the affine function 
$\alpha \mapsto t + \alpha(X)$ on $\g'$, we obtain with 
Theorem~\ref{thm:3.4} (for $V = \g'$ and $V_* = \g$) 
an affine weak Poisson structure on $\g'$, for which 
the Hamiltonian functions $H_X(\alpha) = \alpha(X)$, $X \in \g$, satisfy 
\[ \{H_X, H_Y \} = H_{[X,Y]} + \omega(X,Y) \quad \mbox{ for } \quad 
X,Y \in \g.\] 
The assumptions of Theorem~\ref{thm:3.4} are satisfied 
with $\Lambda = \omega$. 

More generally, suppose that $\g_* \subeq \g'$ is subspace separating the points of $\g$ 
and on which the adjoint maps $\ad^*X$, $X \in \g$, induce continuous 
endomorphisms. Assume further that it contains all functionals 
$i_X \omega$, $X \in \g$. Then Theorem~\ref{thm:3.4} 
yields an affine weak Poisson structure on $\g_*$ with 
\[ \{H_X, H_Y \} = H_{[X,Y]} + \omega(X,Y) \quad \mbox{ for } \quad 
X,Y \in \g.\] 

(e) To combine (c) and (d), we assume that, 
in addition to $\g$ and $\kappa$ as in (c), we are given 
a $\kappa$-skew symmetric continuous derivation 
$D \: \g \to\g$, so that $\omega(X,Y) 
= \kappa(DX,Y)$ is a $2$-cocycle. 
Then we obtain an affine  weak Poisson structure $(\cA,\{\cdot,\cdot\}_{\kappa,D})$ 
on $\g$ with $\cA \cong S(\g)$. The 
Hamiltonian functions $X^\flat(Y) := \kappa(X,Y)$ satisfy 
\[ \{X^\flat, Y^\flat\}_{\kappa,D} = [X,Y]^\flat + \kappa(DX,Y) \quad \mbox{ for } 
\quad X, Y \in \g.\] 
\end{example}

An important concrete class of examples to which the preceding constructions 
apply arise from loop algebras. We shall return to this example later, when 
we connect with Hamiltonian actions of loop groups 
(cf.\ Definition~\ref{def:4.3}). 

\begin{example} \mlabel{ex:2.9} Let 
$\fk$ be a Lie algebra which carries a non-degenerate 
invariant symmetric bilinear form 
$\la \cdot,\cdot \ra$. Then 
the {\it loop algebra of $\fk$} is the Lie algebra 
$\g := \cL(\fk) := C^\infty(\bS^1,\fk)$, endowed with the pointwise bracket. 
We identify the circle $\bS^1$ with $\R/\Z$ and, accordingly, 
elements of $\g$ with $1$-periodic functions on $\R$. 
Then $\kappa(\xi,\eta) = \int_0^1 \la \xi(t),\eta(t)\ra\, dt$ is a 
non-degenerate invariant symmetric bilinear form on $\g$ and 
$D\xi = \xi'$ is a skew-symmetric derivation. We thus obtain 
on $\g$ with Example~\ref{ex:2.8}(e) an affine 
weak Poisson structure with 
\[ \{\xi^\flat, \eta^\flat\} = [\xi,\eta]^\flat + \int_0^1 \la \xi'(t), \eta(t) \ra\, dt.\] 
\end{example}

\begin{remark} Typical predual spaces $\g_* \subeq \g'$ arise 
from geometric situations as follows 
(cf.\ \cite{KW09}): 

(a) If $\g = C^\infty(M,\fk)$, where $\fk$ is finite dimensional with a 
non-degenerate invariant symmetric bilinear form $\la \cdot,\cdot\ra$ 
and $\mu$ is a measure 
on $M$ which is equivalent to Lebesgue measure in charts, then we have 
an invariant pairing $\g \times \g \to \R, (\xi,\eta) \mapsto 
\int_M \la \xi,\eta\ra\, d\mu$ which leads to 
$\g_* \cong \g$. 

(b) If $M$ is a compact smooth manifold and 
$\g = \cV(M)$, the Fr\'echet--Lie algebra of smooth vector fields 
on $M$, then the space $\g_*$ of density-valued $1$-forms 
$\alpha$ on $M$ has a natural $\Diff(M)$-invariant pairing given by 
$(X,\alpha) \mapsto \int_M \alpha(X)$. Locally the elements of $\g_*$ 
are represented by smooth $1$-forms, so that $\g_*$ is much smaller than 
the dual space $\g'$ whose elements are locally represented by distributions. 
\end{remark}

In finite dimensions, symplectic manifolds provide the basic 
building blocks of Poisson manifolds because every Poisson manifold 
is naturally foliated by symplectic leaves. In the infinite dimensional 
context the situation becomes more complicated because a symplectic 
form $\omega \: V \times V \to \R$ on a locally convex space needs not 
represent every continuous linear functional on $V$. If it does, 
$\omega$ is called {\it strong}, and {\it weak} otherwise. Accordingly, 
a $2$-form $\omega$ on a smooth manifold $M$ is called {\it strong} if 
all forms $\omega_p$, $p \in M$, are strong, and {\it weak} otherwise. 

\begin{definition} \mlabel{def:sp-ham}
{\rm  A {\it weak symplectic manifold} is a pair $(M,\omega)$ of a smooth manifold 
$M$ and a closed non-degenerate $2$-form $\omega$. For a weak symplectic manifold we write 
\[ \ham(M,\omega) := \{ X \in\cV(M) \: (\exists H \in C^\infty(M))\, i_X \omega = \dd H\}\] 
for the Lie algebra of Hamiltonian vector fields on $M$ and 
\[ \sp(M,\omega) 
:= \{ X \in\cV(M) \: \cL_X \omega = \dd(i_X\omega) = 0\}\] 
for the larger Lie algebra of symplectic vector fields 
(cf.~\cite{NV10} for related constructions). 
}\end{definition}

\begin{proposition} \mlabel{prop:1.4} 
{\rm(Poisson structure on weak symplectic manifolds)} 
Let $(M,\omega)$ be a weak symplectic manifold. Then 
\[ \cA := \{ H \in C^\infty(M) \: (\exists X_H \in \cV(M)) \ \dd H = i_{X_H} \omega\}\]  
is a unital subalgebra of $C^\infty(M)$ and 
\[ \{ F,G \} := \omega(X_F, X_G) = \dd F(X_G) = X_G F\] 
defines on $\cA$ a Poisson bracket satisfying {\rm(P1)} and {\rm(P3)}. 

If, in addition,  for $v \in T_m(M)$, the condition 
$\omega(X(m),v) =0$ for every $X \in \ham(M,\omega)$, implies $v = 0$, 
then {\rm(P2)} is also satisfied.\begin{footnote}{This condition is satisfied for finite dimensional symplectic 
manifolds, for strongly symplectic smoothly paracompact 
Banach manifolds (cf.\ \cite{KM97}) and for symplectic vector spaces.}
\end{footnote} 
\end{proposition}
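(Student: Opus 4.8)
The plan is to run the classical passage from a symplectic form to a Poisson bracket, taking care that every step uses only tools available on a locally convex manifold: the global (``Koszul'') formula for the exterior derivative of a differential form, and -- when convenient -- Cartan's identity $\cL_X = i_X\circ d + d\circ i_X$. At no point does one need to \emph{invert} $\omega$; one uses only its non-degeneracy in the form ``$i_X\omega = \dd H$ has at most one solution $X$'', so the argument never requires $\omega$ to be strong. I expect the single substantive step to be the Jacobi identity, which is where the closedness of $\omega$ is used and where one must be sure that the global formula for $d\omega$ (equivalently Cartan's identity) is at our disposal in this setting; the remainder is bookkeeping with the definitions.

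\emph{Well-definedness and the algebra.} By non-degeneracy of $\omega$, the field $X_H$ attached to $H\in\cA$ is unique, so $\{F,G\} := \omega(X_F,X_G)$ is unambiguous and smooth, and the identity $\omega(X_F,X_G) = (i_{X_F}\omega)(X_G) = \dd F(X_G) = X_GF$ gives both the asserted chain of formulas and skew-symmetry. Constants lie in $\cA$ because $\dd c = i_0\omega$, and $\dd(FG) = F\,\dd G + G\,\dd F = i_{FX_G+GX_F}\omega$ shows that $FG\in\cA$ with $X_{FG} = FX_G + GX_F$ (this is \eqref{eq:prodrel1}); hence $\cA$ is a unital subalgebra, and the Leibniz rule {\rm(L)} follows from this last relation: $\{F,GH\} = X_{GH}F = G(X_HF) + H(X_GF) = \{F,G\}H + G\{F,H\}$.

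\emph{Closure under the bracket and the Jacobi identity.} To see that $\{\cdot,\cdot\}$ really takes values in $\cA$, I would feed a test vector field $Z$ together with $X_G,X_H$ into the Koszul formula for $d\omega = 0$; the Lie-bracket terms telescope against the derivative terms and leave $\dd\{G,H\}(Z) = -\,\omega([X_G,X_H],Z)$ for all $Z$, so $\{G,H\}\in\cA$ with $X_{\{G,H\}} = [X_H,X_G]$ (this recovers \eqref{eq:brel1}); the same follows from Cartan's identity via $\cL_{X_G}\omega = i_{X_G}d\omega + d\,\dd G = 0$ and $i_{[X_G,X_H]}\omega = \cL_{X_G}(i_{X_H}\omega) = d(X_GH)$. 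For {\rm(J)}, apply the Koszul formula to the triple $(X_F,X_G,X_H)$: expanding $X_F\,\omega(X_G,X_H) = X_F\{G,H\}$ and $\omega([X_F,X_G],X_H) = -\dd H([X_F,X_G]) = -[X_F,X_G]H$, and substituting $X_QP = \{P,Q\}$ everywhere, the closedness $d\omega = 0$ collapses to
\[ 0 = (d\omega)(X_F,X_G,X_H) = -X_F\{G,H\} + X_G\{F,H\} - X_H\{F,G\}, \]
whose right-hand side is exactly the Jacobiator $J(F,G,H)$ of Remark~\ref{rem:3.2}(c). Thus {\rm(J)}, and hence {\rm(P1)}, holds.

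\emph{{\rm(P3)} and, conditionally, {\rm(P2)}.} Condition {\rm(P3)} is immediate: $H\in\cA$ means precisely that $X_H\in\cV(M)$ exists, and then $X_HF = \dd F(X_H) = \omega(X_F,X_H) = \{F,H\}$ for all $F\in\cA$. For {\rm(P2)} under the stated extra hypothesis: if $v\in T_m(M)$ satisfies $\dd F(m)v = 0$ for every $F\in\cA$, then $\omega_m(X_F(m),v) = (i_{X_F}\omega)_m(v) = 0$ for every $F\in\cA$; but $\ham(M,\omega) = \{X_F : F\in\cA\}$ -- indeed $i_X\omega = \dd H$ forces $H\in\cA$ and, by non-degeneracy, $X = X_H$ -- so the hypothesis applies to $v$ and yields $v = 0$.
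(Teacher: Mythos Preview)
Your proof is correct and follows essentially the same path as the paper's: the product rule gives the unital subalgebra structure and Leibniz, Cartan's identity (equivalently, the Koszul formula with a test field $Z$) gives closure under the bracket together with $X_{\{G,H\}}=[X_H,X_G]$, and (P3) and the conditional (P2) are immediate from the definitions.

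The one genuine difference is in the Jacobi step. You plug the triple $(X_F,X_G,X_H)$ into the Koszul formula for $d\omega=0$ and reduce the six terms to the Jacobiator. The paper instead observes that once the relation $[X_F,X_G]=X_{\{G,F\}}$ is in hand, Jacobi drops out in one line:
\[
\{F,\{G,H\}\}=X_{\{G,H\}}F=-[X_G,X_H]F=-X_G X_H F+X_H X_G F=\{G,\{F,H\}\}+\{\{F,G\},H\}.
\]
Your route is perfectly valid and makes the role of $d\omega=0$ explicit, but it is redundant: you have already established the bracket relation, and that relation alone forces Jacobi without a second appeal to the Koszul formula. The paper's argument is therefore a bit more economical; yours has the pedagogical advantage of showing that Jacobi is literally the statement $d\omega(X_F,X_G,X_H)=0$.
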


\begin{proof} Since $\omega$ is non-degenerate, the vector field $X_H$ is uniquely determined by~$H$. 
For $F, G \in \cA$ we have
\[ \dd(FG) = F \dd G + G \dd F = i_{F X_G + G X_F} \omega,\] 
which implies that $\cA$ is a unital subalgebra of $C^\infty(M)$. 

The closedness of the $1$-forms $i_{X_H} \omega$ implies that 
$\cL_{X_H} \omega = 0$. Further, $[\cL_X, i_Y] = i_{[X,Y]}$ leads to 
\begin{align*}
i_{[X_F, X_G]}\omega 
&= [\cL_{X_F}, i_{X_G}]\omega 
= \cL_{X_F}\big(i_{X_G}\omega\big)  = \cL_{X_F}\dd G \\
&=  \dd \big(i_{X_F}\dd G\big) + i_{X_F}\dd(\dd G)  =  \dd \big(i_{X_F}\dd G\big)  = \dd\{G,F\}. 
\end{align*}
Since $\omega$ is non-degenerate, this implies $\{\cA,\cA\} \subeq \cA$ with 
\begin{equation}
  \label{eq:iii}
[X_F, X_G] = X_{\{G,F\}} \quad \mbox{  for } \quad 
F,G \in \cA.   
\end{equation}

It is clear that $\{\cdot,\cdot\}$ is bilinear 
and skew-symmetric, and from $\dd(FG) = F \dd G + G \dd F$ we  conclude 
that it satisfies the Leibniz rule. So it remains to check the Jacobi
identity. This is an easy consequence of \eqref{eq:iii}: 
\begin{align*}
\big\{F,\{G,H\}\big\}
&=  X_{\{G,H\}} F
= - [X_G, X_H]F \\
&= - X_G(X_H F) + X_H(X_G F) =  \big\{G, \{F,H\}\big\} + \big\{\{F, G\},H\}.  
\end{align*}
We have thus verified (P1) and (P3). For (P2) we further need that, for every 
$v \in T_m(M)$, the condition that $\omega(X(m),v) =0$  for every $X \in \ham(M,\omega)$ 
implies $v = 0$.
\smartqed\qed\end{proof}

\begin{example} \mlabel{ex:symvec} If $(V,\omega)$ is a symplectic vector space, 
then a linear functional  $\alpha \: V \to \R$ is contained in the 
Poisson algebra $\cA$ if and only if there exists a vector 
$v \in V$ with $i_v \omega = \alpha$. 
Then $H_v = \alpha = i_v\omega$ is the Hamiltonian function of the constant vector 
field $v$. Accordingly, the Poisson structure on $V$ is determined by 
\begin{equation}
  \label{eq:brakrel2}
\{H_v, H_w \} = \dd H_v(w) = \omega(v,w)\quad \mbox{ for } \quad v,w \in V.  
\end{equation}
Here (P2) follows from the non-degeneracy of $\omega$. 
\end{example}

\subsection{Poisson maps} 

It is now clear how to define the notion of a Poisson map between two 
weak Poisson manifolds. Here we take a closer look at 
Poisson maps arising from inclusions of 
submanifolds and from submersions which correspond to regular Poisson reduction. 
In the context of Hamiltonian 
actions, Poisson maps to weak affine Poisson space arise as momentum maps. 

\begin{definition} 
{\rm Let $(M_j,\cA_j, \{\cdot,\cdot\}_j)$, $j =1,2$, be weak Poisson manifolds. 
A smooth map $\phi \: M_1 \to M_2$ is called a {\it Poisson map}, or {\it morphism of Poisson manifolds}, 
if $\phi^* \cA_{M_2} \subeq \cA_{M_1}$ and $\phi^*\{F,G\} = \{\phi^*F, \phi^*G\}$ 
for $F,G \in \cA_{M_2}$. 
}\end{definition}

\begin{proposition} {\rm(Poisson  submanifolds)} \mlabel{prop:1.3}
Let $(M,\cA,\{\cdot,\cdot\})$ be a weak Poisson manifold and 
$N \subeq M$ be a submanifold with the property that, for every $F \in \cA$, 
the restriction of $X_F$ to $N$ is tangential to~$N$. 
Then $\cI_N := \{ F \in \cA \: F\res_N = 0\}$ is an ideal with respect to the Poisson bracket, 
i.e., $\{\cI_N,\cA\} \subeq \cI_N$, 
and the induced bracket on $\cA_N := \cA/\cI_N \subeq C^\infty(N)$ 
defines a weak Poisson structure on $N$ 
such that the inclusion $N \into M$ is a morphism of weak Poisson manifolds. 
\end{proposition}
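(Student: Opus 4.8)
The plan is to verify axioms (P1)--(P3) for the triple $(N,\cA_N,\{\cdot,\cdot\}_N)$, where the bracket on $\cA_N$ is the one induced from $\cA$, and then to read off from the construction that the inclusion $\iota\:N\into M$ is a Poisson morphism. Throughout I would write $r\:\cA\to C^\infty(N)$, $r(F) := F\res_N$, for the restriction map.

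First I would observe that $\cI_N$ is an ideal of $\cA$ for the pointwise product, since $F\res_N = 0$ forces $(FG)\res_N = 0$; equivalently, $r$ is a homomorphism of unital algebras with kernel $\cI_N$, so that $\cA_N = \cA/\cI_N$ is identified with the unital subalgebra $r(\cA) = \{F\res_N\:F\in\cA\}$ of $C^\infty(N)$. Next I would check that $\cI_N$ is a Poisson ideal: for $F\in\cI_N$, $G\in\cA$ and $p\in N$, the formula $\{F,G\}(p) = \dd F(p)X_G(p)$ from Remark~\ref{rem:2.3}, combined with $X_G(p)\in T_p(N)$ (the tangentiality hypothesis, which applies to every element of $\cA$) and the fact that $\dd F(p)$ annihilates $T_p(N)$ (because $F\res_N = 0$), gives $\{F,G\}\res_N = 0$; together with $\{\cA,\cA\}\subeq\cA$ from (P1) this yields $\{\cI_N,\cA\}\subeq\cI_N$, and by skew-symmetry $\{\cA,\cI_N\}\subeq\cI_N$ as well. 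Hence the bracket descends to a well-defined skew-symmetric bilinear bracket on $\cA_N$ by $\{F\res_N,G\res_N\}_N := \{F,G\}\res_N$, and the Leibniz rule and the Jacobi identity pass to the quotient, so (P1) holds for $N$.

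For (P2) on $N$, I would note that if $p\in N$ and $v\in T_p(N)\subeq T_p(M)$ satisfy $\dd(F\res_N)(p)v = 0$ for all $F\in\cA$, then by the chain rule applied to $F\circ\iota$ one has $\dd F(p)v = 0$ for all $F\in\cA$, so (P2) for $M$ forces $v = 0$. For (P3), given $H\in\cA$ I would take $Y_{H\res_N} := X_H\res_N$, which is a smooth vector field on $N$ precisely because $X_H$ is tangent to $N$. I would then check that this depends only on $H\res_N$: if $H,H'\in\cA$ with $H\res_N = H'\res_N$, then $H-H'\in\cI_N$, so for every $p\in N$ and $F\in\cA$ one has $\dd F(p)\big(X_H(p) - X_{H'}(p)\big) = \{F,H\}(p) - \{F,H'\}(p) = \{F,H-H'\}(p) = 0$ (as $\{\cA,\cI_N\}\subeq\cI_N$), whence (P2) for $M$ gives $X_H(p) = X_{H'}(p)$. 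Finally, for $F,H\in\cA$ and $p\in N$,
\[ \big(Y_{H\res_N}(F\res_N)\big)(p) = \dd(F\res_N)(p)\big(X_H(p)\big) = \dd F(p)X_H(p) = \{F,H\}(p) = \{F\res_N, H\res_N\}_N(p),\]
which is (P3) for $N$. The identities $\iota^*\cA = \cA_N$ and $\iota^*\{F,G\} = \{F,G\}\res_N = \{\iota^*F,\iota^*G\}_N$ then say exactly that $\iota$ is a morphism of weak Poisson manifolds.

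I expect the only genuinely non-formal point to be the claim that $X_H\res_N$ is a smooth vector field on $N$; this is precisely where the tangentiality hypothesis enters, since in a submanifold chart for $N$ tangentiality of $X_H$ means that its component in the complementary model direction vanishes along $N$, so that the restriction is smooth on $N$. The only other subtlety --- that the induced bracket has values in $\cA_N$ rather than merely in $C^\infty(N)$ --- is immediate from $\{\cA,\cA\}\subeq\cA$, and everything else is a routine transport of the axioms through the quotient homomorphism $r$.
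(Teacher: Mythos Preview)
Your proof is correct and follows essentially the same route as the paper: show $\cI_N$ is a Poisson ideal via $\{F,G\}(n)=\dd F(n)X_G(n)=0$ when $F\res_N=0$ and $X_G(n)\in T_n(N)$, then push (P1)--(P3) through the quotient and take $X_G\res_N$ as the Hamiltonian vector field on $N$. Your additional explicit check that $Y_{H\res_N}$ depends only on $H\res_N$ is not needed once (P2) for $N$ is in hand (uniqueness of Hamiltonian vector fields, Remark~\ref{rem:3.2}(a)), which is why the paper omits it, but it does no harm.
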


\begin{proof} First we show that $\cI_N$ is a Poisson ideal. 
So let $F \in \cI_N$ and $G \in \cA$. Then, for $n \in N$,  
$\{F,G\}(n) = \dd F(n) X_G(n) = 0$ because $F$ vanishes on $N$ and 
$X_G(n) \in T_n(N)$. This implies that $\cA_N$ inherits the structure of a Poisson algebra by 
\[ \{F\res_N, G\res_N\} := \{F,G\}\res_N,\] 
and that (P1) is satisfied. 

If $v \in T_n(N)$, $n \in N$, satisfies $\dd F(n)v = 0$ for every 
$F \in \cA_N$, then the same holds for $F \in \cA$, so that (P2) for $\cA$ implies 
(P2) for $\cA_N$. 

To verify (P3), we simply observe that our assumption implies that 
\[ \{F\res_N, G\res_N\} = \{F,G\}\res_N = (X_G F)\res_N = (X_G\res_N) F\res_N.\] 
\smartqed\qed\end{proof}

\begin{remark} \mlabel{rem:hyperplane}
(a) Let $\g$ be a locally convex Lie algebra and endow $\g'$ with the 
weak Poisson structure from Corollary~\ref{cor:3.6} above. Let 
$C \in \z(\g)$ be a central element. Then the hyperplane 
\[ N := \{ \alpha \in \g' \: \alpha(C) = 1 \} \] 
is a submanifold of $\g'$, and for every $F \in \cA_{\g'}$ and 
$\alpha \in N$, we have 
\[ 0 = X_F(\alpha) H_C = \la X_F(\alpha), C \ra,\] 
so that $X_F \in \cV(N)$. Therefore the assumptions of Proposition~\ref{prop:1.3} 
are satisfied, so that $\cA_N := \cA_{\g'}\res_N$ yields a weak Poisson structure 
on the hyperplane $N$. 

(b) The preceding restriction is of particular importance if 
we are dealing with a central extension 
$\tilde\g = \R \oplus_\omega\g$ of the Lie algebra $\g$ with the bracket 
\[ (z,X), (z',X') = (\omega(X,X'), [X,X']),\] 
where $\omega \: \g \times \g \to \R$ is a continuous $2$-cocycle. 
Then 
$C := (1,0)$ is a central element of $\tilde\g$ and 
\[ H_C^{-1}(1) = \{1\} \times \g' \subeq \tilde\g' \] 
inherits a Poisson structure from $\cA_{\tilde\g'}$. Identifying the affine space  
$\g'$ in the canonical fashion with the affine space $\{1\} \times \g'$, 
we thus obtain a weak Poisson structure on $\g'$, where 
$\cA \subeq C^\infty(\g')$ is generated by the continuous affine functions, i.e., 
$\cA \cong S(\g)$ as an associative algebra, and the Poisson bracket on $\cA$ is determined 
by 
\[  \{H_X, H_Y\} = H_{[X,Y]} + \omega(X,Y)  
\quad \mbox{ for } \quad H_X(\alpha) = \alpha(X), X \in \g, \alpha \in \g'\] 
(cf.\ Example~\ref{ex:2.8}(d)). 
\end{remark}

Let $q \: M \to N$ be a smooth {\it submersion}, i.e., 
$q$ is surjective and has smooth local sections. This implies in particular 
that the subalgebra $q^*C^\infty(N)$ consists of those smooth functions on 
$M$ which are constant along the fibers of~$q$. The following proposition discusses 
the most regular form of Poisson quotients. 

\begin{proposition}{\rm (Smooth Poisson quotients)} \mlabel{prop:2.17}
Let $(M,\cA_M,\{\cdot,\cdot\})$ be a  weak Poisson manifold 
and $q \: M \to N$ be a submersion. Then a Poisson subalgebra 
$\cB \subeq q^*C^\infty(N) \cap \cA_M$ 
is the image under $q^*$ of a weak Poisson structure on $N$ 
for which $q$ is a Poisson map if and only if 
\begin{equation}
  \label{eq:redcond}
\ker T_m(q) = \{ v \in T_m(M) \: (\forall F \in \cB)\, \dd F(m)v = 0\}.
\end{equation}
\end{proposition}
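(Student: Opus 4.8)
The plan is to prove both implications of the stated equivalence, treating the condition \eqref{eq:redcond} as the geometric encoding of the statement that $\cB$, viewed via $q^*$, separates tangent vectors on $N$.

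\textbf{The ``only if'' direction.} First I would assume that $\cB = q^*\cA_N$ for a weak Poisson structure $(N,\cA_N,\{\cdot,\cdot\}_N)$ with $q$ a Poisson map. The inclusion $\ker T_m(q) \subseteq \{v : \dd F(m)v = 0 \ \forall F \in \cB\}$ is immediate: if $v \in \ker T_m(q)$ and $F = q^*\bar F$ with $\bar F \in \cA_N$, then $\dd F(m)v = \dd\bar F(q(m)) T_m(q) v = 0$. For the reverse inclusion, suppose $v \in T_m(M)$ satisfies $\dd F(m)v = 0$ for all $F \in \cB$; writing $F = q^*\bar F$ gives $\dd\bar F(q(m))(T_m(q)v) = 0$ for all $\bar F \in \cA_N$, and (P2) for $\cA_N$ forces $T_m(q)v = 0$, i.e.\ $v \in \ker T_m(q)$. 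This direction is essentially formal once one uses that $q$ is a submersion (so $T_m(q)$ is surjective onto $T_{q(m)}(N)$, making the pullback of differentials behave well).

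\textbf{The ``if'' direction.} This is the substantial part. Assume \eqref{eq:redcond}. The natural candidate is $\cA_N := (q^*)^{-1}(\cB)$, the algebra of smooth functions $\bar F$ on $N$ with $q^*\bar F \in \cB$; since $q$ is a submersion, $q^*$ is injective, so $\cA_N \cong \cB$ as algebras and inherits a Poisson bracket from $\cB$. I would verify the three axioms. Axiom (P1) is transported directly from $\cB$. Axiom (P2): if $\bar v \in T_n(N)$ kills all $\dd\bar F(n)$, $\bar F \in \cA_N$, pick $m \in q^{-1}(n)$ and (using surjectivity of $T_m(q)$) lift $\bar v$ to $v \in T_m(M)$; then $\dd F(m)v = \dd\bar F(n)\bar v = 0$ for every $F = q^*\bar F \in \cB$, so by \eqref{eq:redcond} $v \in \ker T_m(q)$, whence $\bar v = T_m(q)v = 0$. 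Axiom (P3) is the delicate point: given $\bar H \in \cA_N$ with $H := q^*\bar H \in \cB \subseteq \cA_M$, the Hamiltonian field $X_H \in \cV(M)$ exists by (P3) for $\cA_M$; I must show $X_H$ is $q$-related to a smooth vector field $X_{\bar H}$ on $N$. The key observation is that $X_H$ is tangent to the fibers' complement in the right sense: for $v \in \ker T_m(q)$ and any $F = q^*\bar F \in \cB$ one has $\dd F(m)(X_H(m)) = \{F,H\}(m)$, and since $\{\cdot,\cdot\}$ restricts to $\cB$ and $\cB \subseteq q^*C^\infty(N)$, the function $\{F,H\}$ is constant along fibers; differentiating this constancy along $\ker T_m(q)$ and combining with \eqref{eq:redcond} shows $X_H(m)$ depends only on $q(m)$ modulo $\ker T_m(q)$ — more precisely, $T_m(q)X_H(m)$ is the same for all $m$ in a fiber. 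Smoothness of the resulting field $X_{\bar H}$ on $N$ follows by working in a chart where $q$ has a smooth local section $\sigma$ and setting $X_{\bar H} := Tq \circ X_H \circ \sigma$; one checks independence of $\sigma$ using the fiber-constancy just established. Finally $X_{\bar H}\bar F = \{\bar F,\bar H\}_N$ for $\bar F \in \cA_N$ follows by pulling back via $q^*$ and using $q^*\{\bar F,\bar H\}_N = \{F,H\} = X_H F = X_H(q^*\bar F) = q^*(X_{\bar H}\bar F)$ together with injectivity of $q^*$. That $q$ is a Poisson map and that $\cB = q^*\cA_N$ are then built into the construction.

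\textbf{Main obstacle.} The genuine difficulty is the smoothness and well-definedness of the reduced Hamiltonian vector field $X_{\bar H}$ on $N$ in (P3): one needs that $X_H$ pushes forward along $q$, and in the locally convex setting one cannot invoke finite-dimensional quotient-manifold machinery but must argue directly with local sections of the submersion and with condition \eqref{eq:redcond} to control the fiber direction. Everything else — the algebra structure, (P1), (P2), and the Poisson-map property — is a transport of structure along the injective algebra homomorphism $q^*$ and is essentially bookkeeping.
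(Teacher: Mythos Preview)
Your proof is correct and follows essentially the same route as the paper's: both directions use the equivalence between \eqref{eq:redcond} and (P2) on $N$, and for (P3) both push $X_H$ forward by showing $T_m(q)X_H(m)$ is constant along fibers, with your explicit use of local sections for smoothness being slightly more detailed than the paper's one-line ``hence $X_F$ is projectable.'' One minor point: the phrase ``differentiating this constancy along $\ker T_m(q)$'' is not the right mechanism---the paper's argument (and the one you actually need) is simply that $\dd\bar F(q(m))\big(T_m(q)X_H(m)\big)=\{F,H\}(m)=\{\bar F,\bar H\}_N(q(m))$ depends only on $q(m)$ for every $\bar F\in\cA_N$, so the already-established (P2) on $N$ forces $T_m(q)X_H(m)=T_{m'}(q)X_H(m')$ whenever $q(m)=q(m')$; your ``more precisely'' clause shows you landed on this conclusion anyway.
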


\begin{proof} Suppose first that 
$q$ is a Poisson map w.r.t.\ the weak Poisson structure 
$(\cA_N,\{\cdot,\cdot\})$ on~$N$. 
Then 
$\cB := q^*\cA_N \subeq \cA_M$ is a Poisson subalgebra 
and property (P2) of $\cA_N$ implies \eqref{eq:redcond}. 

Suppose, conversely, that $\cB \subeq q^*C^\infty(N) \cap \cA_M$ is a Poisson 
subalgebra satisfying \eqref{eq:redcond}.  
Let $\cA_N \subeq C^\infty(N)$ be the subalgebra with 
$q^*\cA_N = \cB$. Since $q^*$ is injective, $\cA_N$ inherits a natural 
Poisson algebra structure from $\cB$. Hence $(N,\cA_N,\{\cdot,\cdot\})$ 
satisfies (P1), and (P2) follows from \eqref{eq:redcond}. 
To see that (P3) also holds, let $f \in \cA_N$ and $F = q^*f \in \cB$.
Then the corresponding Hamiltonian vector field 
$X_F \in \cV(M)$ satisfies for every $G = q^*g\in \cB$ the relation 
\[ \dd g(q(m)) T_m(q) X_F(m)  
= \dd G(m) X_F(m) = \{G,F\}(m)= \{g,f\}(q(m)). \] 
For $m' \in M$ with $q(m) = q(m')$, this leads to 
\[ \dd g(q(m)) T_m(q) X_F(m)  = \dd g(q(m)) T_{m'}(q) X_F(m') \] 
for every $g$, so that (P2) implies 
$T_m(q) X_F(m)  = T_{m'}(q) X_F(m')$. Hence $X_F$ is projectable to a 
vector field $Y \in \cV(N)$ which is $q$-related to $X_F$. 
We then have for every $g \in \cA_N$ the relation 
$\{g,f\} = Y g$, so that (P3) is also satisfied. 
\smartqed\qed\end{proof}

\begin{remark} If, in the context of Proposition~\ref{prop:2.17}, 
the subalgebra $\cB$ is Poisson commutative, then (i) 
implies that the vector fields $X_F$, $F \in \cB$, are tangential 
to the fibers of $q$, hence projectable  to $0$. We thus obtain 
the trivial Poisson structure on $N$ for which all Poisson brackets 
vanish. 
\end{remark}

\section{Momentum maps} \mlabel{sec:3} 

We now turn to momentum maps, which we consider as Poisson morphisms to 
affine Poisson spaces which arise naturally as subspaces 
of the duals of Lie algebras $\g$. If $\g$ is the Lie algebra of a 
Lie group, we also have a global structure coming from the corresponding 
coadjoint action, but unfortunately there need not be any locally convex 
topology on $\g'$ for which the coadjoint action is smooth. 

\subsection{Momentum maps as Poisson morphisms} 

Since momentum maps are Poisson maps $\Phi \: M  \to V$, where 
$V$ carries an affine weak Poisson structure (Theorem~\ref{thm:3.4}), 
we start with a characterization of such maps. 

\begin{proposition} \mlabel{prop:3.1} 
Let $(V, \cA_V)$ be an affine Poisson manifold  
corresponding to a Lie algebra structure on the space 
$\hat\cA_* = \cA_* + \R 1$ of affine functions on $V$, 
$(M,\cA_M)$ a weak Poisson manifold and 
$\Phi \: M \to V$ a smooth map such that 
$\phi(\alpha) := \Phi^*\alpha = \alpha \circ \Phi \in \cA_M$ for every 
$\alpha \in \hat V_*$. Then the following are equivalent
\begin{itemize}
\item[\rm(i)] $\Phi^* \: \cA_M \to \cA_V$ is a homomorphism of Lie algebras, i.e., 
$\Phi$ is a Poisson map. 
\item[\rm(ii)] $\phi \: V_* \to \cA_M$ satisfies 
$\phi(\{\alpha,\beta\}) = \{ \phi(\alpha), \phi(\beta)\}$ for 
$\alpha,\beta \in V_*$. 
\item[\rm(iii)] $\Phi \: M \to V$ satisfies the equivariance condition 
  \begin{equation}
    \label{eq:equivrel}
T_m(\Phi) X_{\phi(\alpha)}(m) = X_\alpha(\Phi(m)) \quad \mbox{ for } \quad 
m \in M, \alpha \in V_*.
  \end{equation}
\end{itemize}
\end{proposition}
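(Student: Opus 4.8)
The plan is to prove the chain (i)$\Leftrightarrow$(ii)$\Leftrightarrow$(iii): the first equivalence by reducing everything to the generating space $V_*$ of $\cA_V$, the second by a pointwise computation exploiting that $V_*$ separates the points of $V$. The implication (i)$\Rightarrow$(ii) is immediate, since $V_*\subeq\hat V_*\subeq\cA_V$. For (ii)$\Rightarrow$(i) the point is that $\Phi^*\colon C^\infty(V)\to C^\infty(M)$ is an algebra homomorphism, so $\Phi^*\cA_V\subeq\cA_M$ already follows from $\phi(\hat V_*)\subeq\cA_M$ together with $\cA_V\cong S(V_*)$ being generated by $V_*$. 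On $\cA_V\times\cA_V$ I would consider the ``defect''
\[ \delta(F,G):=\Phi^*\{F,G\}-\{\Phi^*F,\Phi^*G\}\in\cA_M. \]
It is skew-symmetric, vanishes whenever $F$ or $G$ is constant, and --- since $\Phi^*$ is multiplicative and both Poisson brackets obey the Leibniz rule --- is a derivation in each argument (with coefficients pulled back along $\Phi^*$), exactly as for the Jacobiator in Remark~\ref{rem:3.2}(c). Hence $\delta$ vanishes identically as soon as it vanishes on the generating set $V_*$; and $\delta|_{V_*\times V_*}=0$ is precisely (ii). This settles (i)$\Leftrightarrow$(ii).

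For (ii)$\Leftrightarrow$(iii) fix $m\in M$ and $\alpha,\beta\in V_*$. Since $\alpha$ is linear, $\dd(\phi(\alpha))(m)=\dd(\alpha\circ\Phi)(m)=\alpha\circ T_m(\Phi)$. Using the relation $\{F,G\}(p)=\dd F(p)X_G(p)$ valid in any weak Poisson manifold (Remark~\ref{rem:2.3}), on the source side
\[ \{\phi(\alpha),\phi(\beta)\}(m)=\dd(\phi(\alpha))(m)\,X_{\phi(\beta)}(m)=\alpha\big(T_m(\Phi)\,X_{\phi(\beta)}(m)\big), \]
where $X_{\phi(\beta)}\in\cV(M)$ exists by (P3) for $\cA_M$ as $\phi(\beta)\in\cA_M$; on the target side, with $v=\Phi(m)$,
\[ \phi(\{\alpha,\beta\})(m)=\{\alpha,\beta\}(\Phi(m))=\dd\alpha(\Phi(m))\,X_\beta(\Phi(m))=\alpha\big(X_\beta(\Phi(m))\big), \]
with $X_\beta$ the affine Hamiltonian field \eqref{eq:hamvec2} on $V$. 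Thus (ii) for the pair $(\alpha,\beta)$ at $m$ reads $\alpha\big(T_m(\Phi)X_{\phi(\beta)}(m)-X_\beta(\Phi(m))\big)=0$, and since $V_*$ separates the points of $V\cong T_{\Phi(m)}(V)$, requiring this for all $\alpha\in V_*$ is equivalent to $T_m(\Phi)X_{\phi(\beta)}(m)=X_\beta(\Phi(m))$, i.e.\ to \eqref{eq:equivrel}. Letting $\beta$ and $m$ vary gives (ii)$\Leftrightarrow$(iii).

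I do not expect a genuine obstacle; the content is organizational, and the two points that need care are (a) the biderivation argument that reduces (i) to the generators --- the same mechanism as in Remark~\ref{rem:3.2}(c)--(d), where one notes in passing that $\{G\in\cA_V:\delta(\alpha,G)=0 \text{ for all } \alpha\in V_*\}$ is a unital subalgebra, so generation by $V_*$ suffices --- and (b) keeping the signs straight between the two forms $\{F,G\}(p)=\dd F(p)X_G(p)=-\dd G(p)X_F(p)$, so that the source- and target-side evaluations are matched with the correct arguments. The hypothesis that $\phi(\alpha)\in\cA_M$ for all $\alpha\in\hat V_*$ --- not merely that $\Phi$ is smooth --- is exactly what makes all the Hamiltonian fields $X_{\phi(\beta)}$ available, so that the whole argument goes through.
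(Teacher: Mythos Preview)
Your proof is correct and follows essentially the same architecture as the paper's: (i)$\Rightarrow$(ii) trivially, (ii)$\Rightarrow$(i) by reduction to the generators $V_*$, and (ii)$\Leftrightarrow$(iii) by pairing the vector identity against $\alpha\in V_*$ and using that $V_*$ separates points. The only difference is in the mechanism for (ii)$\Rightarrow$(i): you use the biderivation property of the defect $\delta$ (modeled on Remark~\ref{rem:3.2}(c)), whereas the paper argues pointwise via Remark~\ref{rem:2.3}, observing that for $F\in\cA_V\cong S(V_*)$ one has $\dd F(\Phi(p))\in V_*$, so both $\{\Phi^*F,\Phi^*G\}(p)$ and $\Phi^*\{F,G\}(p)$ reduce to the pair $(\alpha,\beta)=(\dd F(\Phi(p)),\dd G(\Phi(p)))$ --- either route is fine and neither is materially shorter.
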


\begin{proof} (i) $\Rarrow$ (ii) is trivial. 

(ii) $\Rarrow$ (i): Clearly, $\Phi^* \: \cA_V \to \cA_M$ 
is a homomorphism of commutative algebras because 
$\Phi^*(V_*) \subeq \cA_M$ and $\cA_V$ is generated by $V_*$. 
Let $F,G \in \cA_V$. For $p \in M$ we put 
$\alpha := \dd F_{\Phi(p)}$ and $\beta := \dd G_{\Phi(p)}$, which are elements of $V_*$. 
Then 
\[ \dd(F \circ \Phi)_p = \alpha  \circ T_p(\Phi) 
= \dd(\Phi^*\alpha)_p = (\dd\phi(\alpha))_p,\] 
and we thus obtain 
\[ \{\Phi^*F, \Phi^* G\}(p) 
= \dd\phi(\alpha)_p X_{\Phi^*G}(p)
= \{ \phi(\alpha), \Phi^*G\}(p)
= \{ \phi(\alpha), \phi(\beta)\}(p)\] 
and 
\[ \phi([\alpha,\beta])(p) = \la [\alpha,\beta], \Phi(p)\ra 
= \{F,G\}(\Phi(p)). \] 
This proves that (ii) implies (i). 

(ii) $\Leftrightarrow$ (iii): The equivariance 
relation \eqref{eq:equivrel} is an identity for elements of $V$. 
Hence it is satisfied if and only if it holds as an identity of real 
numbers when we apply elements of the separating subspace $V_*$. This means that 
\[ \dd\phi(\beta)_m X_{\phi(\alpha)}(m) 
= \{\beta, \alpha\}(\Phi(m)) \quad \mbox{ for } \quad 
m \in M, \alpha,\beta \in V_*.\] 
Since the left hand side equals 
$\{\phi(\beta),\phi(\alpha)\}(m)$, this relation is equivalent to (ii). 
\smartqed\qed\end{proof}

The classical case of the preceding proposition is the one 
where $V = \g'$ is the dual of locally convex Lie algebra, endowed with 
the weak-$*$-topology. 

\begin{corollary} \mlabel{cor:3.2} 
Let $\g$ be a locally convex Lie algebra, endow 
$\g'$ with the canonical linear Poisson structure $\cA_{\g'}$, let 
$(M,\cA_M)$ be a weak Poisson manifold and 
$\Phi \: M \to \g'$ be a map such that all functions 
$\phi_X(m) := \Phi(m)(X)$ are contained in $\cA_M$. Then the following are equivalent
\begin{itemize}
\item[\rm(i)] $\Phi^* \: \cA_{\g'}\to \cA_M$ is a homomorphism of Lie algebras, i.e., 
$\Phi$ is a Poisson map. 
\item[\rm(ii)] $\phi \: \g \to \cA_M$ satisfies 
$\phi([X,Y]) = \{ \phi(X), \phi(Y)\}$ for 
$X,Y \in \g$. 
\item[\rm(iii)] $\Phi \: M \to \g'$ satisfies the equivariance condition 
  \begin{equation}
    \label{eq:equivrel2}
T_m(\Phi) X_{\phi(X)}(m) = - \Phi(m)\circ \ad X \quad \mbox{ for } \quad 
m \in M, X \in \g.
  \end{equation}
\end{itemize}
\end{corollary}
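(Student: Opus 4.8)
The plan is to obtain this corollary as the special case of Proposition~\ref{prop:3.1} in which $V = \g'$ carries the canonical \emph{linear} Poisson structure, i.e.\ the instance of Theorem~\ref{thm:3.4} with $V_* = \g$, $\Lambda = 0$, and $[\cdot,\cdot]_0$ the Lie bracket of $\g$. So first I would fix the identifications: equip $\g'$ with the weak-$*$-topology $\sigma(\g',\g)$, for which $\g$, acting by evaluation, is exactly the continuous dual of $\g'$ and, since $\g$ is Hausdorff, is a separating subspace in the sense of the Remark preceding Theorem~\ref{thm:3.4}. This is precisely the setting of Corollary~\ref{cor:3.6} and Example~\ref{ex:2.8}(a); the space $\hat V_* = \R 1 \oplus \g$ of affine functions on $\g'$ then carries the Lie bracket with vanishing cocycle, and the standing hypothesis on $\Phi$ in Proposition~\ref{prop:3.1} (that $\alpha \circ \Phi \in \cA_M$ for every affine $\alpha$) is exactly the stated requirement that all $\phi_X = \Phi(\cdot)(X)$ lie in $\cA_M$.

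The heart of the argument is then a dictionary between conditions (i)--(iii) of Proposition~\ref{prop:3.1} and (i)--(iii) here. Condition (i) is the same statement. For (ii): by the defining formula of Theorem~\ref{thm:3.4}, the Poisson bracket of the two linear functions $X, Y \in \g = V_*$ on $\g'$ is the linear function $v \mapsto \la [X,Y], v \ra$, i.e.\ $H_{[X,Y]}$ (the relation $\{H_X, H_Y\} = H_{[X,Y]}$ of Example~\ref{ex:2.8}(a)); hence $\phi(\{\alpha,\beta\}) = \{\phi(\alpha),\phi(\beta)\}$ for $\alpha,\beta \in V_*$ reads $\phi([X,Y]) = \{\phi(X),\phi(Y)\}$. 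For (iii): formula \eqref{eq:hamvec2} gives $X_\alpha(v) = \alpha^\sharp - (\ad_0 \alpha)^* v$; here $\Lambda = 0$ forces $\alpha^\sharp = 0$, and $(\ad_0 X)^* v = v \circ \ad X = \ad^*X \cdot v$, so $X_X(v) = -\, v \circ \ad X$. Substituting $v = \Phi(m)$ turns the equivariance relation \eqref{eq:equivrel} of Proposition~\ref{prop:3.1} into $T_m(\Phi) X_{\phi(X)}(m) = -\,\Phi(m) \circ \ad X$, which is \eqref{eq:equivrel2}. With these three translations, the equivalences follow immediately from Proposition~\ref{prop:3.1}.

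There is no genuine obstacle; the proof is bookkeeping, and the only two points that need a little care are of that nature: confirming that $\g$ really is the separating continuous dual of $(\g', \sigma(\g',\g))$ so that Theorem~\ref{thm:3.4} genuinely applies (this is covered by Corollary~\ref{cor:3.6}), and keeping the sign in $X_X(v) = -\, v \circ \ad X$ and the direction of the Lie bracket consistent so that \eqref{eq:equivrel2} comes out with the correct sign. Once these identifications are spelled out, the whole proof is a short paragraph invoking Proposition~\ref{prop:3.1}.
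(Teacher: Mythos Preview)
Your proposal is correct and is exactly the paper's intended argument: Corollary~\ref{cor:3.2} is stated without proof in the paper precisely because it is the immediate specialization of Proposition~\ref{prop:3.1} to $V=\g'$, $V_*=\g$, $\Lambda=0$, with $[\cdot,\cdot]_0$ the bracket of $\g$, and your dictionary for (i)--(iii) is the right one. One tiny quibble: the fact that $\g$ separates points of $\g'$ is automatic (if $\alpha(X)=0$ for all $X$ then $\alpha=0$) and does not use that $\g$ is Hausdorff; the Hausdorff hypothesis is what makes $\g'$ separate $\g$, not the other way around.
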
 

\begin{remark} \mlabel{rem:3.3} If we endow $\g'$ with an affine 
Poisson structure corresponding to a Lie algebra cocycle 
$\omega$, then the condition Corollary~\ref{cor:3.2}(ii) has to be modified to 
\[ \{\phi(X),\phi(Y)\} = \phi([X,Y])  + \omega(X,Y) \quad \mbox{ for } \quad 
X,Y \in \g.\] 
\end{remark}

\begin{definition}\mlabel{def:momap}
{\rm An infinitesimal action of the locally convex Lie algebra 
$\g$ on the smooth manifold $M$ is a Lie algebra 
homomorphism $\beta \: \g \to \cV(M)$ 
for which all maps $\beta_m \: \g \to T_p(M), X \mapsto \beta(X)_m$ are continuous.  

If $(M,\cA_M,\{\cdot,\cdot\})$ is a weak Poisson manifold, then 
an infinitesimal action $\beta \: \g \to \cV(M)$ of a locally convex 
Lie algebra on $M$ is said to be {\it Hamiltonian} 
if there exists a homomorphism 
$\phi \: \g \to \cA_M$ of Lie algebras satisfying 
$X_{\phi(Y)} = -\beta(Y)$ for every $Y \in \g$. Then the map 
\[ \Phi \: M \to \g^*, \quad \Phi(m)(Y) := \phi_Y(m) \] 
is called the corresponding {\it momentum map}. 
Note that $\Phi(M) \subeq \g'$ is equivalent to the requirement that, for 
every $m \in M$, the linear functional $\g \to \R, Y \mapsto \phi_Y(m)$ is continuous. 
}\end{definition}

\begin{corollary} \mlabel{cor:A.IV.10}
 If $\Phi \: M \to \g'$ is a momentum map for a
Hamiltonian action of $\g$ on the weak Poisson manifold 
$(M,\cA_M,\{\cdot,\cdot\})$, 
then $\Phi$ is a Poisson map. 
\end{corollary}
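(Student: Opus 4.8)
The plan is to reduce Corollary~\ref{cor:A.IV.10} to Corollary~\ref{cor:3.2}, which was already established. The statement asserts that the momentum map $\Phi$ of a Hamiltonian $\g$-action is a Poisson map, where $\g'$ is given its canonical linear Poisson structure $\cA_{\g'}$. By Corollary~\ref{cor:3.2}, it suffices to verify any one of the equivalent conditions (i)--(iii) there. The natural candidate is condition (ii): that $\phi \: \g \to \cA_M$ satisfies $\phi([X,Y]) = \{\phi(X),\phi(Y)\}$ for all $X,Y \in \g$.

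First I would recall that, by Definition~\ref{def:momap}, a Hamiltonian action comes equipped \emph{by definition} with a Lie algebra homomorphism $\phi \: \g \to \cA_M$ satisfying $X_{\phi(Y)} = -\beta(Y)$, and the momentum map is $\Phi(m)(Y) := \phi_Y(m)$. Since $\phi$ is a homomorphism of Lie algebras from $\g$ (with its bracket) to $(\cA_M,\{\cdot,\cdot\})$, we have precisely $\phi([X,Y]) = \{\phi(X),\phi(Y)\}$, which is exactly condition (ii) of Corollary~\ref{cor:3.2}. I would also note that the functions $\phi_X(m) = \Phi(m)(X)$ are by hypothesis contained in $\cA_M$, so the standing assumption of Corollary~\ref{cor:3.2} is met, and that $\Phi(M) \subeq \g'$ (rather than just $\g^*$) holds because the Hamiltonian condition forces $Y \mapsto \phi_Y(m)$ to be continuous for each $m$ (as remarked after Definition~\ref{def:momap}).

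Then I would simply invoke Corollary~\ref{cor:3.2}, concluding from (ii) $\Rarrow$ (i) that $\Phi^* \: \cA_{\g'} \to \cA_M$ is a homomorphism of Lie algebras, i.e.\ $\Phi$ is a Poisson map. That completes the proof.

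Honestly, I expect there to be no real obstacle here: the corollary is essentially a restatement of the definition of a Hamiltonian action in the language of Poisson morphisms, with Corollary~\ref{cor:3.2} doing all the actual work. The only point requiring a word of care is making explicit that the defining homomorphism property of $\phi$ is literally condition (ii), and that the hypotheses of Corollary~\ref{cor:3.2} (the functions $\phi_X$ lying in $\cA_M$) are built into Definition~\ref{def:momap}. So the ``proof'' will be two or three sentences citing Definition~\ref{def:momap} and Corollary~\ref{cor:3.2}.
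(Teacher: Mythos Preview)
Your proposal is correct and matches the paper's approach: the paper states the corollary without proof, placing it immediately after Definition~\ref{def:momap} and Corollary~\ref{cor:3.2}, so it is intended as the direct observation that the defining Lie algebra homomorphism $\phi$ is precisely condition (ii) of Corollary~\ref{cor:3.2}. Your two or three sentences of justification are exactly what is needed.
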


\begin{example} For a locally convex Lie algebra $\g$, the infinitesimal 
coadjoint action 
$\beta \: \g\to \cV(\g')$ is given by the vector fields 
$\beta(X)(\alpha) := \alpha \circ \ad X = (\ad X)^*\alpha$. 
In view of Corollary~\ref{cor:3.2}, this action is Hamiltonian 
with momentum map $\Phi = \id_{\g'}$. 
\end{example}


\begin{remark} \mlabel{rem:2.5} (From symplectic actions to Hamiltonian actions) 
Let $(M,\omega)$ be a connected weak symplectic manifold 
and $\cA$ be as in Proposition~\ref{prop:1.4}. 
Further, let $\beta\: \g\to \sp(M,\omega)$ 
be an infinitesimal action by symplectic vector fields 
(cf.~Definition~\ref{def:sp-ham}). 
For $\beta$ to be a Hamiltonian action requires a lift of this homomorphism 
to a  Lie algebra homomorphism 
\[ \phi \: \g \to (\cA, \{\cdot,\cdot\}). \] 
A necessary condition for such a lift to exist is that 
$\beta(\g) \subeq \ham(M,\omega)$. 
Even if this is the case, 
such a lift does not always exist. To understand the obstructions, 
we recall the short exact sequence 
\[\0 \to  \R \to  \cA \to \ham(M,\omega) \to \0,\] which exhibits 
the Lie algebra $\cA$ as a central extension of the Lie algebra 
$\ham(M,\omega)$ 
(cf.~\cite{NV10} for an in depth discussion of related central extensions). 

Assuming that $\beta(\g) \subeq \ham(M,\omega)$, we consider the 
subspace 
\[ \hat\g := \{ (X,F) \in \g \oplus \cA \: \beta(X) = - X_F \} \] 
and observe that this is a Lie subalgebra of the direct 
sum $\g \oplus \cA$. Moreover, 
the projection $p(X,F) := X$ is a surjective homomorphism 
whose kernel consists of all pairs $(0,F)$, where $F$ is a constant function. 
We thus obtain the central extension 
\[ \R \cong \R(0,1) \to \hat\g\sssmapright{p} \g.\] 
The existence of a homomorphic lift $\phi \: \g \to \cA$ is equivalent 
to the existence of a splitting 
$\sigma \: \g \to \hat\g$. Therefore the obstruction to the existence 
of $\phi$ is a central $\R$-extension of $\g$, resp., a corresponding 
cohomology class in $H^2(\g,\R)$ (cf.\ \cite{Ne02}). 
\end{remark}

\subsection{Infinite dimensional Lie groups} 

Before we turn to momentum maps and Hamiltonian actions, we briefly 
recall the basic concepts underlying the notion of an infinite dimensional 
Lie group. 
A {\it (locally convex) Lie group} $G$ is a group equipped with a 
smooth manifold structure modeled on a locally convex space 
for which the group multiplication and the 
inversion are smooth maps. We write $\1 \in G$ for the identity element. 
Then each $x \in T_\1(G)$ corresponds to
a unique left invariant vector field $x_l$ with 
$x_l(\1) = x$. The space of left invariant vector fields is closed under the Lie
bracket of vector fields, hence inherits a Lie algebra structure. 
We thus obtain on $\g := T_\1(G)$ a continuous Lie bracket which
is uniquely determined by $[x,y] = [x_l, y_l](\1)$ for $x,y \in \g$. 
We shall also use the functorial notation $\L(G) := (\g,[\cdot,\cdot])$ 
for the Lie algebra of $G$ and, accordingly, 
$\L(\phi) = T_\1(\phi)\: \L(G_1) \to \L(G_2)$ 
for the Lie algebra homomorphism associated to 
a smooth homomorphism $\phi \: G_1 \to G_2$ of Lie groups. 
Then $\L$ defines a functor from the category 
of locally convex Lie groups to the category of locally convex 
Lie algebras. If $\g$ is a Fr\'echet, resp., a Banach space, then 
$G$ is called a {\it Fr\'echet-}, resp., a 
{\it Banach--Lie group}. 

A smooth map $\exp_G \: \L(G) \to G$  is called an {\it exponential function} 
if each curve $\gamma_x(t) := \exp_G(tx)$ is a one-parameter group 
with ${\gamma_x}'(0)= x$. 
Not every infinite dimensional Lie group has an exponential 
function (\cite[Ex.~II.5.5]{Ne06}), but exponential functions 
are unique whenever they exist. 

With the left and right multiplications 
$\lambda_g(h) := \rho_h(g) := gh$ we write 
$g.X = T_\1(\lambda_g)X$ and 
$X.g = T_\1(\rho_g)X$ for $g \in G$ and $X \in \g$. Then the two maps 
\begin{equation}
  \label{eq:tang-triv}
G \times \g \to TG, \quad (g,X) \mapsto g.X 
\quad \mbox{ and } \quad 
G \times \g \to TG, \quad (g,X) \mapsto X.g 
\end{equation}
trivialize the tangent bundle~$TG$. 

\subsection{Coadjoint actions and affine variants}

To add some global aspects to the Poisson structures on the dual 
$\g'$ of a Lie algebra $\g$, we assume that 
$\g = \L(G)$ for a Lie group $G$. Then 
the {\it adjoint action} of $G$ on $\g$ is defined by 
$\Ad(g) := \L(c_g)$, where $c_g(x) = gxg^{-1}$ is the conjugation map. The 
adjoint action is smooth in the sense that it 
defines a smooth map $G \times \g \to \g$. 
The {\it coadjoint action} 
on the topological dual space $\g'$ is defined by 
\[ \Ad^*(g)\alpha := \alpha \circ \Ad(g)^{-1}. \] 
The maps $\Ad^*(g)$ are continuous with respect to the 
weak-$*$-topology on $\g'$ and all orbit maps for $\Ad^*$ are smooth 
because, for every $X \in \g$ and $\alpha \in\g'$, 
the map $g \mapsto \alpha(\Ad(g)^{-1}X)$ is smooth. 
If $G$ is a Banach--Lie group, then the coadjoint action is smooth 
with respect to the norm topology on $\g'$, but in general it is not 
continuous, as the following example shows.
\begin{footnote}{By definition of the weak-$*$-topology on $\g'$, 
which corresponds to the subspace topology with respect to  the embedding 
$\g' \into \R^\g$, a map 
$\phi \:M \to \g'$ is smooth with respect to this topology 
if and only if all functions $\phi_X(m) := \phi(m)(X)$ are smooth on $M$. 
}\end{footnote}

\begin{example} \mlabel{ex:1.18} Let $V$ be a locally convex space and 
$\alpha_t(v) := e^t v$. Then the semidirect product 
\[ G := V \rtimes_\alpha \R, \qquad 
(v,t) (v',t') = (v + e^t v', t + t') \] 
is a Lie group. From $c_{(v,t)}(w,s) = ((1-e^s)v + e^t w, s)$ we derive that 
\[ \Ad(v,t)(w,s) = (e^t w - sv, s).\] 
Accordingly, we obtain 
\[ \Ad^*(v,t)(\alpha, u) = (e^{-t} \alpha, u + e^{-t} \alpha(v)).\] 
If $\Ad^*$ is continuous, restriction to $t = 1$ implies that 
the evaluation map 
\[ V' \times V \to \R, \quad (\alpha,v) \mapsto \alpha(v) \] 
is continuous, but w.r.t.\ the weak-$*$-topology on $V'$, this happens if 
and only if $V$ is finite dimensional. Therefore $\Ad^*$ is not continuous 
if $\dim V = \infty$.\begin{footnote}
{One can ask more generally, for which locally convex spaces 
$V$ and which topologies on $V'$ the evaluation map 
$V \times V' \to \R$ is continuous. This happens if and only if 
the topology on $V$ can be defined by a norm, and then the operator norm 
turns $V'$ into a Banach space for which the evaluation map is continuous. 
}\end{footnote}
\end{example}

\begin{remark} \mlabel{rem:gamma-D} 
(a) If $\g'$ is endowed with the affine Poisson structure 
corresponding to a $2$-cocycle $\omega \: \g \times \g \to \R$, 
then the corresponding infinitesimal action 
$\beta \: \g \to \cV(\g')$ of the Lie algebra $\g$ by affine vector 
fields need not integrate to an action of a connected 
Lie group $G$ with $\L(G) = \g$, but if $G$ is simply connected, 
then it does (cf.\ \cite[Prop.~7.6]{Ne02}). 

(b) The situation is much better for the Poisson structures 
on $\g$ discussed in Example~\ref{ex:2.8}(e). Then 
the Hamiltonian vector field associated to $Y \in \g$ is the affine 
vector field given by 
\begin{equation}
  \label{eq:aff-ad}
   X_{H_Y}(Z) = [Y,Z] - DY.
\end{equation}
Let $G$ be a Lie group with Lie algebra $\g$ and 
$\gamma_D \: G \to \g$ be a $1$-cocycle for the adjoint action with 
$T_\1(\gamma_D) = D$. Here the cocycle condition is 
\[ \gamma_D(g h) = \gamma_D(g) + \Ad_g\gamma_D(h) \quad \mbox{ for } \quad 
g,h \in G.\] 
Since the adjoint action is smooth, such a cocycle 
exists if $G$ is simply connected. Then we obtain an affine action of $G$ on $\g$ 
by 
\[ \Ad^D_g X := \Ad_g X - \gamma_D(g) \] 
integrating the given infinitesimal action of $\g$ determined by 
\eqref{eq:aff-ad}. 
\end{remark}

\begin{definition} 
  \mlabel{def:A.IV.6} 
{\rm Let $(M,\cA)$ be a weak Poisson manifold, $G$ a connected Lie group, and 
$\sigma \: G \times M \to M$ a smooth (left) action. We also write 
$g.p := \sigma_g(p) := \sigma^p(g) := \sigma(g,p)$ 
and define the vector fields 
\[ X_\sigma(p) := T_{(\1,p)}(\sigma)(X,0) \quad \mbox{ for } \quad X\in \g.\] 
Then we have a homomorphism 
\[  \L(\sigma) \: \g \to {\cal V}(M) \quad \hbox{ with } \quad 
X \mapsto -X_\sigma \] 
which defines an infinitesimal action of $\g$ on $M$. 

The action $\sigma$ is called {\it Hamiltonian} if its derived 
action $\L(\sigma)$ is Hamiltonian, i.e., if 
there exists a homomorphism of Lie
algebras $\phi \: \g \to \cA$ with $X_{\phi(Y)} = Y_\sigma$ 
for $Y \in \g$ such that, for every $m \in M$, the linear map 
$\Phi(m) \: \g \to \R, Y \mapsto \phi(Y)(m)$ is continuous. 
Then $\Phi \: M \to \g'$ is called the corresponding 
{\it momentum map} (cf.\ Definition~\ref{def:momap}). 
}\end{definition}

\begin{remark} For any smooth left action $\sigma \: G \times M \to M$ and 
$p \in M$, the right invariant vector field 
$X_r(g) = X.g$ on $G$ and the corresponding vector field $X_\sigma \in \cV(M)$ 
are $\sigma^p$-related. This follows from the relation 
$\sigma^p(hg) = h.\sigma^p(g)$ for $g,h \in G$. Combining this observation 
with the ``Related Vector Field Lemma'', one obtains a proof for 
$\L(\sigma) \: \g \to \cV(M)$ being a homomorphism of Lie algebras. 
\end{remark}

\begin{example} Let $(V,\omega)$ be a locally convex symplectic vector space 
and $G = (V,+)$ the translation group of $V$. 
Then the translation action $\sigma(v,w) := v+w$ 
of $V$ on itself is symplectic and every  constant vector field 
$v_\sigma(w) = v$ is Hamiltonian (cf.~Example~\ref{ex:symvec}). The relation 
\[ \{v,w\} =  \omega(v,w) \] 
shows that there is no homomorphism 
 $\phi \: \g \to \cA$ with 
$X_{\phi(v)} = v_\sigma$ for every $v \in \g$. 
\end{example}

\begin{remark} Of particular interest with respect to Poisson structures
are Lie groups $G$ whose Lie algebras $\g$ can be approximated in a natural 
way by finite dimensional ones. This can be done by direct or projective limits. 

(a) If $G = \indlim G_n$ is a Lie group whose Lie algebra 
$\g$ is a directed union of a sequence of 
finite dimensional subalgebras $\g_n = \L(G_n), n \in \N$,  
then $\g$ carries the finest locally convex topology which actually 
coincides with the direct limit topology  
(see \cite{Gl03, Gl05} for direct limit manifolds and Lie groups). 
Then its topological dual $V := \g'$, endowed with the topology 
of uniform convergence of bounded or compact subsets 
is a Fr\'echet space (isomorphic to a product $\R^\N$) and 
all assumptions (a)-(d) from Example~\ref{ex:2.12} are satisfied 
(\cite[Rem.~16.34]{Gl08}), so that 
we obtain a linear Poisson structure on $V = \g'_c = \g'_b$. In this case 
the coadjoint orbits of $G$ are unions of finite dimensional manifolds, 
which can be used to obtain symplectic manifold structures on them 
(cf.\ \cite{Gl03},\cite{CL13}). 

(b) The opposite situation is obtained for Lie groups $G = \prolim G_n$ 
which are projective limits of finite dimensional Lie groups $G_n$  (see \cite{HN09}). 
Typical examples are groups of infinite jets of diffeomorphisms. 
Here $\g$ is a Fr\'echet space (isomorphic to $\R^\N$) and 
the dual space $\g'$ is the union of the dual spaces $\g_n^*$. 
Endowed with the topology of uniform convergence of bounded or compact subsets 
the space $V = \g'$ satisfies all assumptions (a)-(d) from 
Example~\ref{ex:2.12} (\cite[Rem.~16.34]{Gl08}). In this case all coadjoint orbits are 
finite dimensional because they can be identified with coadjoint orbits 
of some $G_n$. 

In both cases we obtain weak Poisson structures 
on $\g'$ for which $\cA = C^\infty(\g')$ is the full algebra of smooth 
functions for a suitable topology which is the weak-$*$-topology in the first 
case and the finest locally convex topology in the second. 
\end{remark}

\begin{example} \mlabel{ex:ham-g*} Let $G$ be a Lie group and $\g= \L(G)$. Further,  
let $\g_* \subeq \g'$ be an $\Ad^*(G)$-invariant 
separating subspace endowed with a locally convex 
topology for which the coadjoint action $\Ad_*(g) := \Ad^*(g)\res_{\g_*}$ on $\g_*$ 
is smooth. Then $\g_*$ carries a natural linear weak Poisson structure 
with $\cA \cong S(\g)$  and 
\[ \{F,H\}(\alpha) = \la \alpha, [\dd F(\alpha), \dd H(\alpha)] \ra 
\quad \mbox{ for }\quad 
\alpha \in \g_*, F,H \in \cA\] 
(Example~\ref{ex:2.8}(b); see also \cite[Sect.~4.2]{Ra11} for similar 
requirements in the context of Banach spaces). 

For $X, Y \in \g$, we have $\{H_X, H_Y\} = H_{[X,Y]}$ and the corresponding 
Hamiltonian vector fields are $X_{H_Y}(\alpha) = -\alpha \circ \ad Y$. 
Therefore the coadjoint action $\Ad_*$ on $\g_*$ is Hamiltonian and its momentum map 
is the inclusion $\g_* \into \g'$. 

For the coadjoint action $\Ad_*$ of $G$ on $\g_*$, the ``tangent space'' to the orbit 
of $\alpha \in \g_*$ is the space 
$\{ X_{\Ad_*}(\alpha) \: X \in \g\} = 
\alpha \circ \ad(\g)$. This is also the characteristic subspace 
of the Poisson structure (cf.~Remark~\ref{rem:2.4}) and the corresponding 
skew-symmetric form is given by 
\[ \omega_\alpha(X_F(\alpha), X_H(\alpha)) = \{F,H\}(\alpha) 
= \dd F(\alpha) X_H(\alpha),\] 
resp., 
\[ \omega_\alpha(\alpha \circ \ad X, \alpha \circ \ad Y) 
= \{H_X,H_Y\}(\alpha) = H_{[X,Y]}(\alpha) = \alpha([X,Y]).\]  

Fix $\alpha \in \g_*$. Then we obtain on $G$ a $2$-form by 
\begin{align*}
\Omega_\alpha(X.g, Y.g) 
&:=
 \omega_{g.\alpha}(X_{\Ad_*}(g.\alpha),Y_{\Ad_*}(g.\alpha)) 
= \omega_{g.\alpha}([g.\alpha  \circ \ad X, g.\beta \circ \ad Y])\\
&= (g.\alpha)([X,Y]) = \alpha([\Ad_g^{-1}X, \Ad_g^{-1}Y]).
\end{align*}
This means that $\Omega$ is a left-invariant $2$-form on $G$. 
Since $(\Omega_{\alpha,\1})(X,Y) = \alpha([X,Y])$ is a $2$-cocycle, 
$\Omega$ is closed, the  radical of $\Omega_\1$ coincides with the 
Lie algebra of the stabilizer subgroup $G_{\alpha}$. 

If $\cO_\alpha :=\Ad_*(G)\alpha$ carries a manifold structure for which the 
orbit map $G \to \cO_\alpha$ 
is a submersion, we thus obtain on $\cO_\alpha$ the structure 
of  a weak symplectic manifold. However, if the Lie algebra 
$\g$ is not a Hilbert space, then it is not clear how to obtain a 
manifold structure on $\cO_\alpha$, resp., the homogeneous space 
$G/G_\alpha$. In any case, we may consider the pair 
$(G,\Omega_\alpha)$ as a non-reduced variant of the symplectic structure 
on the coadjoint orbit. 
\end{example}

\subsection{Cotangent bundles of Lie groups and their reduction} 

Let $G$ be a Lie group, $\g= \L(G)$ and 
$\g_* \subeq \g'$ be as in Example~\ref{ex:ham-g*}, 
so that the coadjoint action 
$\Ad_*$ on $\g_*$ is smooth. Then the ``cotangent bundle'' 
\[ T_*(G) := \bigcup_{g \in G} 
\{ \alpha \in T'_g(G) \: \alpha \circ T_\1(\rho_g) \in \g_*\}\] 
carries a natural Lie group structure for which it is isomorphic 
to the semidirect product $\g_* \rtimes_{\Ad_*} G$.  
Here we identify $(\alpha,g)$ with the element 
$\alpha \circ T_\1(\rho_g)^{-1} \in T_g(G)'$, which leads to an injection 
$T_*(G) \into T'(G)$. 

The lift of the left, resp., right  multiplications to $T_*(G)$ is given by 
\begin{equation}
  \label{eq:leftact}
 \sigma^l_g(\alpha,h) = (\alpha \circ \Ad_g^{-1}, gh) 
\quad \mbox{ and } \quad \sigma^r_g(\alpha,h) = (\alpha, hg).
\end{equation}
The corresponding infinitesimal action is given by the 
vector fields
\[ X_{\sigma^l}(\alpha,h) = (-\alpha \circ \ad X, X.h) \quad \mbox{ and } \quad 
X_{\sigma^r}(\alpha,h) = (0, h.X).\] 
The smooth $1$-form defined by 
\[ \Theta(\alpha, g)(\beta, X.g) := \alpha(X) \] 
is an analog of the Liouville $1$-form. It follows from \eqref{eq:leftact} that 
it is invariant
under both actions $\sigma^l$ and $\sigma^r$. Note that 
\[ \Theta(X_{\sigma^l})(\alpha,h) = \alpha(X) \quad \mbox{ and } \quad 
\Theta(X_{\sigma^r})(\alpha,h) = \alpha(\Ad_h X).\] 
Now 
\[ \Omega := -\dd \Theta \] 
is closed smooth $2$-form on $T_*(G)$. To see that it is non-degenerate,
we observe that its invariance under left and right translations and the 
Cartan formulas imply 
\begin{equation}
  \label{eq:ix}
(i_{X_{\sigma^l}} \Omega)_{(\alpha,g)}(\beta, Y.g) 
= \dd (i_{X_{\sigma^l}} \Theta)_{(\alpha,g)}(\beta, Y.g) = \beta(X) 
\end{equation}
and, for the constant vertical vector field $Z(\alpha,g) = \gamma \in \g_*$, the 
relation $\Theta(Z) = 0$ leads to 
\begin{equation}
  \label{eq:iz}
(i_Z \Omega)_{(\alpha,g)}(\beta, Y.g) 
= - (\cL_Z \Theta)_{(\alpha,g)}(\beta, Y.g) = \gamma(Y).
\end{equation}
We conclude that $(T_*(G), \Omega)$ is a weak symplectic manifold. 

We thus obtain by Proposition~\ref{prop:1.4} on 
$T_*(G)$ a weak Poisson structure on the subalgebra 
\[ \cA := \{ H \in C^\infty(T_*(G)) \: (\exists X_H \in \cV(T_*(G)) 
\ \dd H = i_{X_H} \Omega\}\subeq C^\infty(T_*(G)).\] 
Let $C^\infty_*(G)\subeq C^\infty(G)$ denote the subalgebra of smooth 
functions $H$ whose differential 
$\dd H$ defines a smooth section $G \to T_*(G)$, resp., a smooth function 
\[ \delta H\: G \to \g_*, \quad (\delta H)_g(X) := (\dd H)_g(X.g). \] 
Then  \eqref{eq:iz} shows that, for $H \in C^\infty_*(G)$, 
the vertical vector field on $T_*(G)$ defined by 
$X_H(\alpha,g) := (\delta H(g),0)$ satisfies 
\[ (i_{X_H} \Omega)_{(\alpha,g)}(\beta, Y.g) = (\delta H)_g(Y) 
= (\dd H)_g(Y.g).\] 
For the corresponding function $\tilde H$ on $T_*(G)$, we therefore 
have $\dd \tilde H = i_{X_H} \Omega$, so that $H \in \cA$. 
On the other hand, we have seen above that, for $X \in \g$, the 
function $H_X(\alpha, g) = \alpha(X)$ on $T_*(G)$ satisfies 
$\dd H_X = i_{X_{\sigma^l}} \Omega$. 
This shows that $\cA$ contains the subalgebra $C^\infty_*(G)$ and 
the algebra $S(\g)$ of polynomial functions on the first factor $\g_*$ 
generated by the functions $H_X$, $X \in \g$. We therefore have 
$S(\g) \otimes C^\infty_*(G) \subeq \cA$. 

The Poisson bracket vanishes on $C^\infty_*(G)$, and, for $X \in \g$ and 
$F \in C^\infty_*(G)$, we have 
\[ \{\tilde F, H_X\}(\alpha, g) = \dd F_g(X.g) = (X_r F)(g) 
= \tilde{X_r F}(\alpha,g).\] 
We also note that, for $X, Y \in \g$, we have by \eqref{eq:ix}
\begin{align*}
\{H_X, H_Y\}(\alpha,g) 
&= \Omega(X_{\sigma^l}, Y_{\sigma^l})(\alpha,g)
=  (i_{X_{\sigma^l}} \Omega)_{(\alpha,g)}(-\alpha \circ \ad Y, Y.g) \\
&= -(\alpha \circ \ad Y)(X) = \alpha([X,Y]) 
= H_{[X,Y]}(\alpha,g).
\end{align*}
This implies that $S(\g)$ and 
$\cB := S(\g) \otimes C^\infty_*(G)$ are Poisson subalgebras of $\cA$. 
In particular, $\cB$ defines a weak Poisson structure on $T_*(G)$. 

Consider the submersion $q \: T_*(G) \to \g_*, (\alpha,g) \mapsto \alpha$. 
Then $\cB \cap q^*C^\infty(\g_*) \cong  S(\g)$, and since 
$H_X(\alpha,g) = \la q(\alpha,g), X \ra$, condition \eqref{eq:redcond} 
in Proposition~\ref{prop:2.17} is satisfied. Therefore 
$q$ is a Poisson map if we endow $\g_*$ with the Poisson structure 
determined on $\cA_{\g_*} = S(\g)$ by 
$\{H_X,H_Y\} = H_{[X,Y]}$ for $X,Y \in \g.$

The fibers of $q$ are the orbits of the right translation action $\sigma^r$ 
which is a Hamiltonian action of $G$ on $T_*(G)$ and 
$\cB^{\sigma^r(G)} \cong S(\g)$ is the subalgebra of invariant functions in $\cB$. 
On the other hand, $q$ is a momentum map for the left action 
$\sigma^l$ of $G$ on $T_*(G)$. Therefore the passage to the orbit space 
$\g_* \cong T_*(G)/\sigma_r(G)$ is an example of Poisson reduction 
from the Hamiltonian action $\sigma^l$ to the coadjoint action 
$\Ad_*$ on $\g_*$ (cf.~\cite[Thm.~13.1.1]{MR99} for the finite dimensional case). 

\begin{remark} (Magnetic cotangent bundles) 
A natural variation of this construction is obtained by 
using a continuous $2$-cocycle $b \: \g \times \g \to \R$ 
to get a closed right invariant $2$-form $B \in \Omega^2(G)$. 
If $\pi \: T_*G \to G$ is the bundle projection, then 
\[ \Omega_b := \Omega + \pi^*B \] 
is a closed right invariant $2$-form on $T_*(G)$. Since its values in 
vertical directions are the same as for $\Omega$, the form $\Omega_b$ is also 
non-degenerate. We thus obtain an infinite dimensional version 
of a magnetic cotangent bundle (cf.~\cite[\S 6.6]{MR99}, \cite[\S 7.2]{MMOPR07}). 

The Poisson bracket on $C^\infty_*(G)$ still vanishes, and, for $X \in \g$ and 
$F \in C^\infty_*(G)$, we still have $\{\tilde F, H_X\} = \tilde{X_r F}$. 
But for $X, Y \in \g$ we obtain
\begin{align*}
\{H_X, H_Y\} 
&= \Omega(X_{\sigma^l}, Y_{\sigma^l})+ B(X_r, Y_r) 
= H_{[X,Y]}+ b(X,Y).
\end{align*}
Therefore the quotient Poisson structure on $\g_* \cong T_*(G)/\sigma^r(G)$ 
is the affine Poisson structure from 
Example~\ref{ex:2.8}(b) (see \cite{GBR08} for applications of these techniques). 
\end{remark}

\section{Lie algebra-valued momentum maps} 
\mlabel{sec:6}

We have already seen in Example~\ref{ex:2.8}(e) how to obtain from an invariant 
symmetric bilinear form $\kappa$ and a $\kappa$-skew-symmetric derivation 
$D$ a weak affine Poisson structures on a Lie algebra~$\g$. 
This leads naturally to a 
concept of a Hamiltonian $G$-action with a $\g$-valued momentum map. 
For the classical case where $G$ is the loop group $\cL(K) = C^\infty(\bS^1,K)$ 
of a compact Lie group and the derivation is given by the derivative, 
we thus obtain the affine action on $\g = \cL(\fk)$ which corresponds to the 
action of $\cL(K)$ on gauge potentials on the trivial $K$-bundle over $\bS^1$. 

\subsection{Hamiltonian actions for affine Poisson structures 
on Lie algebras} \mlabel{subsec:4.1}

Let $G$ be a Lie group with Lie algebra $\g$,  
$\kappa \: \g \times \g \to \R$ be a continuous $\Ad(G)$-invariant 
non-degenerate symmetric bilinear form and 
$D \: \g \to \g$ be a continuous derivation for which we have a smooth 
$\Ad$-cocycle $\gamma_D \: G \to \g$ with ${\gamma_D}'(\1) = D$. 
In Remark~\ref{rem:gamma-D}(b) we have seen that this leads to a smooth 
affine action of $G$ on $\g$ by 
\[ \Ad^D_g \xi := \Ad_g \xi - \gamma_D(g) \quad \mbox{ for } \quad 
g \in G, \xi \in \g. \] 
We recall from Example~\ref{ex:2.8}(e) that $\g$ carries a weak Poisson 
structure $\{\cdot,\cdot\} = \{\cdot,\cdot\}_{\kappa,D}$ 
with $\cA \cong S(\g)$, generated by the functions 
$\xi^\flat := \kappa(\xi,\cdot)$. It is determined by 
\[ \{\xi^\flat, \eta^\flat \} 
= [\xi,\eta]^\flat + \kappa(D\xi,\eta) \quad \mbox{ for } \quad \xi, \eta \in g.\] 
For any $F \in \cA$ and $\xi \in \g$, the linear functional 
$\dd F(\xi) \in \g'$ is represented by 
$\kappa$, hence can be identified with an element $\nabla F(\xi) \in \g$, 
the {\it $\kappa$-gradient of $F$ in $\xi$}. In these terms, the Poisson structure 
on $\g$ is given by 
\[ \{ F, H\}(\xi) :=\kappa(\xi, [\nabla F(\xi), \nabla H(\xi)]) 
+ \kappa(D \nabla F(\xi), \nabla H(\xi))\quad \mbox{ for } \quad 
F, H \in \cA, \xi \in \g.\] 
The corresponding Hamiltonian vector fields are determined by 
\begin{align*}
(X_H F)(\xi) 
&= \{F,H\}(\xi) = \kappa(\nabla F(\xi), [\nabla H(\xi), \xi]) 
+ \kappa(D \nabla F(\xi), \nabla H(\xi))\\
&= \dd F(\xi) \big([\nabla H(\xi),\xi] - D\nabla H(\xi)\big), 
\end{align*}
which leads to 
\[ X_H(\xi) = [\nabla H(\xi), \xi] - D\nabla H(\xi).\] 
For $H = \eta^\flat$, $\eta \in \g$, this specializes to
\begin{equation}
 \label{eq:eta}
 X_{\eta^\flat} = \ad \eta - D\eta = \eta_{\Ad^D} \quad \mbox{ for } 
\quad \eta \in \g.
\end{equation}

\subsection{Loop groups and the affine action on gauge potentials} 

An important example arises for 
$\bS^1 = \R/\Z$ and the loop group 
$G = \cL(K) := C^\infty(\bS^1,K)$ where $K$ is a Lie group 
for which $\fk$ carries a non-degenerate $\Ad(K)$-invariant 
symmetric bilinear form $\la \cdot,\cdot\ra$. 
We then put $D\xi = \xi'$ and 
$\kappa(\xi,\eta) = \int_0^1 \la \xi(t),\eta(t)\ra\, dt$ 
as in Example~\ref{ex:2.9}. 
Then $\gamma_D(g) = \delta^r(g) := g' g^{-1}$ is 
the right logarithmic derivative, so that 
\begin{equation}
  \label{eq:Ad^D}
\Ad_g^D \xi = \Ad_g \xi - g'g^{-1} =: \xi^g
\end{equation}
corresponds to the natural affine action on the space 
$\Omega^1(\bS^1,\fk) \cong C^\infty(\bS^1,\fk)$ of gauge potentials of the 
trivial $K$-bundle $\bS^1 \times K$ over~$\bS^1$. 

For $\xi  \in \cL(\fk)$, 
let $\gamma_\xi \: \R \to K$ denote the unique solution of the 
initial value problem 
\begin{equation}
  \label{eq:ivp}
 \gamma(0) = \1 \quad \mbox{ and } 
\quad \delta^l(\gamma) := \gamma^{-1}{\gamma}' = \xi.
\end{equation}
For each $s \in \R$ we write 
\[ \Hol_s \: \cL(\fk) \to K, \quad \xi \mapsto \gamma_\xi(s), \] 
for the corresponding {\it holonomy map}. It satisfies the 
equivariance relation 
\begin{equation}\label{eq:holeq} 
 \Hom_s(\xi^g) = g(0) \Hol_s(\xi) g(s)^{-1} \quad \mbox{ for } \quad g \in \cL(K).
\end{equation}
In particular $\Hol := \Hol_1$ is equivariant with respect to the conjugation 
action of $K$ on itself. This formula also implies that 
$\gamma_{\xi^g} = g(0) \gamma_\xi g^{-1}$, so that the affine $\cL(K)$-action on $\g$ 
corresponds on the level of curves to the multiplication 
with the pointwise inverse on the right. 

\begin{proposition} For any Lie group $K$ for which 
\eqref{eq:ivp} is solvable,\begin{footnote}{This is the case for so-called 
{\it regular Lie groups} (cf.~\cite{Ne06}). 
Banach--Lie groups and in particular finite dimensional Lie groups are regular.}  
\end{footnote}
the action \eqref{eq:Ad^D} of the subgroup 
$\Omega(K) := \{ g \in \cL(K) \: g(0) = \1 \}$ 
on $\g$ is free and its orbits coincide with the fibers of $\Hol$, so that 
$\Hol$ induces a bijection 
\[ \oline\Hol \: \cL(\fk)/\Omega(K) \to K, \quad 
[\xi] \mapsto \Hol_1(\xi).\] 
\end{proposition}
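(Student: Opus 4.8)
The plan is to establish the three assertions --- freeness of the $\Omega(K)$-action, identification of its orbits with the fibers of $\Hol = \Hol_1$, and hence the induced bijection $\oline\Hol$ --- in that order, using the equivariance relation \eqref{eq:holeq} and the companion formula $\gamma_{\xi^g} = g(0)\gamma_\xi g^{-1}$ as the main tools.

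First I would prove freeness. Suppose $g \in \Omega(K)$ fixes $\xi \in \cL(\fk)$, i.e.\ $\xi^g = \xi$. By the curve-level formula $\gamma_{\xi^g} = g(0)\gamma_\xi g^{-1}$, and using $g(0) = \1$, this gives $\gamma_\xi = \gamma_\xi g^{-1}$ as curves $\R \to K$, hence $g(s) = \1$ for all $s$ after using that $\gamma_\xi(s)$ is invertible; so $g = \1$. (One should note this uses that $\xi^g$ determines its own solution curve, which is exactly what \eqref{eq:ivp} provides; solvability of \eqref{eq:ivp} is the standing hypothesis.)

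Next I would identify the orbits with the fibers of $\Hol$. For the inclusion ``orbit $\subseteq$ fiber'': if $\eta = \xi^g$ with $g \in \Omega(K)$, then $g(0) = \1$ forces, via \eqref{eq:holeq} with $s = 1$ and also $g(1) = \1$ (since $g$ is $1$-periodic, $g(1) = g(0) = \1$), the identity $\Hol_1(\xi^g) = g(0)\Hol_1(\xi)g(1)^{-1} = \Hol_1(\xi)$. Hence the whole $\Omega(K)$-orbit of $\xi$ lies in a single fiber of $\Hol$. For the reverse inclusion ``fiber $\subseteq$ orbit'': given $\xi,\eta$ with $\Hol_1(\xi) = \Hol_1(\eta)$, I would construct $g \in \Omega(K)$ with $\xi^g = \eta$ by setting $g(s) := \gamma_\eta(s)\gamma_\xi(s)^{-1}$. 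One checks: $g(0) = \1$ (both curves start at $\1$); $g$ is $1$-periodic because $\gamma_\eta(s+1) = \gamma_\eta(s)\Hol_1(\eta)$ and $\gamma_\xi(s+1) = \gamma_\xi(s)\Hol_1(\xi)$ --- a consequence of the multiplicativity of the solution of \eqref{eq:ivp} under the translation $s \mapsto s+1$ combined with $1$-periodicity of $\xi$ --- so that $g(s+1) = \gamma_\eta(s)\Hol_1(\eta)\Hol_1(\xi)^{-1}\gamma_\xi(s)^{-1} = g(s)$ precisely when $\Hol_1(\xi) = \Hol_1(\eta)$; and $g$ is smooth as a product of smooth curves. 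Finally, from $\gamma_\eta = g\,\gamma_\xi$ one gets $\delta^l(\gamma_\eta) = \Ad_g\,\delta^l(\gamma_\xi) - \delta^r(g) \cdot(\text{sign convention})$, which via \eqref{eq:Ad^D} reads $\eta = \xi^g$; equivalently, invert the formula $\gamma_{\xi^g} = g(0)\gamma_\xi g^{-1} = \gamma_\xi g^{-1}$ to read off $\gamma_{\xi^g} = \gamma_\eta$, hence $\xi^g = \eta$ by uniqueness in \eqref{eq:ivp}.

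With these two facts, the induced map $\oline\Hol\colon \cL(\fk)/\Omega(K) \to K$ is well defined (constant on orbits, by the first inclusion), injective (two classes with the same holonomy lie in one orbit, by the second inclusion), and surjective (given $k \in K$, solvability of \eqref{eq:ivp} for a constant $\xi \equiv X$ with $\exp_K(X) = k$, or more robustly a path argument, produces $\xi$ with $\Hol_1(\xi) = k$; in fact the constant loop $\xi(t) = X$ gives $\gamma_\xi(s) = \exp_K(sX)$ whenever $\exp_K$ is defined, and for general regular $K$ one takes any smooth curve from $\1$ to $k$ and differentiates it logarithmically to obtain a suitable $\xi$). I expect the main obstacle to be the $1$-periodicity check for the constructed $g$ and, relatedly, getting the two composition/periodicity identities for $\gamma_\xi$ aligned with the precise sign conventions in \eqref{eq:ivp}, \eqref{eq:Ad^D} and \eqref{eq:holeq}; once the bookkeeping $\gamma_\xi(s+1) = \gamma_\xi(s)\Hol_1(\xi)$ and $\gamma_{\xi^g} = g(0)\gamma_\xi g^{-1}$ are in hand, everything else is formal.
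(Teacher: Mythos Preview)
Your overall strategy matches the paper's, but the ``fiber $\subseteq$ orbit'' step contains a concrete error in the side on which the holonomy acts. Because the initial value problem \eqref{eq:ivp} uses the \emph{left} logarithmic derivative $\delta^l(\gamma)=\gamma^{-1}\gamma'$, it is left multiplication by a constant that leaves $\delta^l$ unchanged; hence the correct translation identity is
\[
\gamma_\xi(s+1)=\Hol_1(\xi)\,\gamma_\xi(s),
\]
not $\gamma_\xi(s)\,\Hol_1(\xi)$ as you wrote. With your definition $g(s)=\gamma_\eta(s)\gamma_\xi(s)^{-1}$ and the correct identity one obtains
\[
g(s+1)=g_1\,\gamma_\eta(s)\,\gamma_\xi(s)^{-1}g_1^{-1}=g_1\,g(s)\,g_1^{-1},
\qquad g_1:=\Hol_1(\xi)=\Hol_1(\eta),
\]
so your $g$ is periodic only up to conjugation, hence in general does \emph{not} lie in $\Omega(K)$. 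Moreover, your $g$ does not give $\xi^g=\eta$: from $\gamma_{\xi^g}=g(0)\gamma_\xi g^{-1}=\gamma_\xi g^{-1}$ you need $\gamma_\xi g^{-1}=\gamma_\eta$, i.e.\ $g=\gamma_\eta^{-1}\gamma_\xi$, not its inverse $\gamma_\eta\gamma_\xi^{-1}$.

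The paper's choice $g(t):=\gamma_\eta(t)^{-1}\gamma_\xi(t)$ fixes both issues simultaneously: then $g(t+1)=\gamma_\eta(t)^{-1}g_1^{-1}g_1\gamma_\xi(t)=g(t)$ is genuinely periodic, and $\gamma_\xi g^{-1}=\gamma_\eta$ yields $\xi^g=\eta$ by uniqueness in \eqref{eq:ivp}. The remainder of your outline --- freeness via $\gamma_{\xi^g}=\gamma_\xi g^{-1}$, the inclusion ``orbit $\subseteq$ fiber'' via \eqref{eq:holeq}, and the passage to $\oline\Hol$ --- is correct and essentially identical to the paper's argument.
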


\begin{proof} The relation $\xi^g = \xi$ implies $\gamma_{\xi^g} = \gamma_\xi$, so that 
$g(0) \gamma_\xi = \gamma_\xi g$. 
For $g(0) = \1$ this implies that $g = \1$ is constant. 
Therefore the action of the subgroup 
$\Omega(K)$ on $\g$ is free and $\Hol$ is constant on the $\Omega(K)$-orbits. 

Suppose, conversely, that $\Hol(\xi) = \Hol(\eta)$, i.e., 
$g_1 := \gamma_\xi(1) = \gamma_\eta(1)$. 
Since $\xi$ and $\eta$ are periodic, $\gamma_\xi(t+1) = g_1 \gamma_\xi(t)$ and 
$\gamma_\eta(t+1) = g_1 \gamma_\eta(t)$ holds for all $t \in \R$. Therefore 
$g(t) := \gamma_\eta(t)^{-1} \gamma_\xi(t)$ is a smooth 
periodic curve defining an element of 
$\Omega(K)$ with $\gamma_\eta = \gamma_\xi g^{-1}$. This in turn leads to the relation 
\[ \eta = \delta^l(\gamma_\eta) = \delta^l(g^{-1}) + \Ad_g \delta^l(\gamma_\xi) 
= \Ad_g \xi - \delta^r(g) = \xi^g.\]
\smartqed\qed
\end{proof}

\begin{remark} (An attempt on Poisson reduction from $\cL(\fk)$ to $K$) 
For every connected Lie group $K$, the map 
$\Hol \: \cL(\fk) \to K$ is surjective and it is easy to see that it is a 
submersion. In view of Proposition~\ref{prop:2.17}, it makes sense to 
ask for a Poisson subalgebra $\cB \subeq \cA\cong S(\cL(\fk))$ 
that induces on $K$ a Poisson structure for which $q$ is a Poisson map. 
A natural candidate for $\cB$ is the invariant subalgebra  
\[ \cB := \cA^{\Omega(K)} \] 
consisting of $\Omega_K$-invariant functions in $\cA$, i.e., 
functions that are constant on the fibers of $\Hol$. 

If $K$ is compact, then the exponential 
function $\exp \: \fk \to K$ is surjective, and since 
$\Hol\res_{\fk} =\exp$, it follows that $\Hol(\fk) = K$, which in turn means that 
every $\Omega(K)$-orbit meets the subspace $\fk \subeq \cL(\fk)$ of constant 
functions. We conclude that the restriction map 
$R \: \cB \to \Pol(\fk)$ is injective and that its image consists 
of polynomial functions on $\cL(\fk)$ that are constant on the fibers of the exponential 
function. Let $T \subeq K$ be a maximal torus and $\ft = \L(T)$ be its 
Lie algebra. Then every $F \in \cB$ restricts to a polynomial 
$F\res_\ft$ which is constant on the cosets of the lattice 
$\ker(\exp\res_\ft)$, hence constant. Since every element 
$X \in \fk$ is contained in the Lie algebra of a maximal torus, it 
follows that $F$ is constant on $\fk$, and therefore $F$ is constant on 
$\cL(\fk)$. We conclude that $\cB = \R \1$ contains only constant functions. 

This shows that the algebra $\cA\cong S(\g)$ of polynomial functions is too small to 
lead to a sufficiently large algebra of $\Omega(K)$-invariant functions. 
It is an interesting question whether there exists a suitable 
larger Poisson algebra $\tilde\cA\supeq \cA$ for which 
$\tilde\cA^{\Omega(K)}$ satisfies the assumptions of 
Proposition~\ref{prop:2.17}. 
\end{remark}

\begin{definition} \mlabel{def:4.3} {\rm A {\it Hamiltonian $\cL(K)$-space}
    \begin{footnote}{This concept depends on the choice of the 
invariant symmetric bilinear form $\la \cdot,\cdot\ra$ on the Lie algebra $\fk$. 
Changing this form leads to a different Poisson structure on $\cL(\fk)$.}
\end{footnote}is a smooth 
weak Poisson manifold \linebreak $(M,\cA,\{\cdot,\cdot\})$, endowed with a 
smooth action $\sigma \: \cL(K)\times M \to M$ which has a smooth 
momentum map 
\[ \Phi \: M \to \cL(\fk) \] 
which is a Poisson map with respect to $(\cA,\{\cdot,\cdot\})$. 
\begin{footnote}
  {In \cite{AMM98} one finds this  concept for the special case 
where $(M,\omega)$ is a weak symplectic manifold. In this case one 
requires the action $\sigma$ to be symplectic and the existence of a 
smooth $\cL(K)$-equivariant map $\Phi \: M \to \cL(\fk)$ 
such that the functions 
\[ \phi(\xi)(m) := \kappa(\Phi(m), \xi) \quad \mbox{ satisfy }\quad 
i_{\xi_\sigma} \omega = \dd(\phi(\xi)).\] 
These conditions are easily verified to be equivalent to ours 
(cf.\ Proposition~\ref{prop:3.1}). 
}\end{footnote}
}\end{definition}

\begin{remark} Since the subgroup $\Omega(K) \subeq \cL(K)$ acts freely on $\g$ 
and $\Phi$ is equivariant, it also acts freely on $M$, so that we can consider
 the holonomy space 
\[ \Hol(M) := M/\Omega(K), \] 
and obtain a commutative diagram 
\[ \matr{ 
M & \mapright{\Phi} & \cL(\fk) \\ 
\mapdown{} & & \mapdown{\Hol} \\ 
\Hol(M) & \mapright{\oline\Phi} & K}\] 
\end{remark}

The geometric structure contained in the bottom row consists in an 
action of the Lie group $K \cong \cL(K)/\Omega(K)$ on the orbit space 
$\Hol(M)$ and an equivariant map $\oline\Phi \: \Hol(M) \to K$. 
If $(M,\omega)$ is weak symplectic, this 
is enriched by the data contained in natural differential forms 
on $\Hol(M)$ and $K$, which leads to the concept of a 
quasihamiltonian $K$-space for which 
$\oline\Phi \: M \to K$ plays the role of a {\it group-valued momentum map}. 
If $K$ is a compact Lie group and 
$\cL(K)$ denotes a suitable Banach--Lie group of differentiable loops, 
such as $H^1$-loops, then the 
Equivalence Theorem in \cite[Thm.~8.3]{AMM98} 
asserts that quasihamiltonian actions of $K$ are in one-to-one correspondence 
with Hamiltonian $\cL(K)$-actions on Banach manifolds $M$ for which the momentum map 
$\Phi \: M \to \cL(\fk)$ is proper. 

Since our setup for Hamiltonian $\cL(K)$-action uses only the invariant 
bilinear form on $\fk$, it is also valid for non-compact Lie groups $K$ 
and even for infinite-dimensional ones, provided $\fk$ carries an 
invariant non-degenerate symmetric bilinear form. 

In particular, the construction of a Lie group-valued momentum map 
$\mu = \exp \circ \Phi$ from a Lie algebra-valued momentum map 
$\Phi \: M \to \g$ 
with respect to a Poisson structure $\{\cdot,\cdot\}_{\kappa,D}$ on $\g$ 
(cf.\ \cite[Prop.~3.4]{AMM98}) works quite generally for any pair 
$(\kappa,D)$ as in Subsection~\ref{subsec:4.1},

\bigskip

\end{document}